\newtheorem{teo}{Theorem}
\newtheorem{lem}{Lemma}[section]
\newtheorem{cor}[lem]{Corollary}
\newtheorem{prop}[lem]{Proposition}
\newtheorem{defi}[lem]{Definition}
\newtheorem{oss}[lem]{Remark}
\newcommand{\spa}{\mathrm{span}}
\newcommand{\dive}{\mathrm{div}}
\newcommand{\ud}{\mathrm{d}}
\renewcommand{\phi}{\varphi}
\newcommand{\R}{{\mathbb{R}}}
\newcommand{\bbR}{{\mathbb{R}}}
\newcommand{\bbS}{{\mathbb{S}}}
\newcommand{\average}{{\mathchoice {\kern1ex\vcenter{\hrule height.4pt
width 6pt
depth0pt} \kern-9.7pt} {\kern1ex\vcenter{\hrule height.4pt width 4.3pt
depth0pt}
\kern-7pt} {} {} }}
\newcommand{\LL}{\mathcal{L}}
\title[Symmetry results for nonlinear elliptic operators]{Symmetry results for nonlinear elliptic operators with unbounded drift}
\thanks{A.F. and M.N. are supported by the ERC grant  {\it EPSILON -- Elliptic Pde's and Symmetry of Interfaces and Layers for Odd Nonlinearities}. 
M.N. and A.P. acknowledge partial support by the CaRiPaRo project {\it Nonlinear Partial Differential Equations: models, analysis, and control-theoretic
problems}.}
\author{Alberto Farina, Matteo Novaga, Andrea Pinamonti
}
\address{LAMFA-CNRS UMR 7352, Universit\'e de Picardie Jules Verne, Facult\'e des Sciences, 33, Rue Saint-Leu, 80039,  Amiens, France}
\address {Institut Camille Jordan, CNRS UMR 5208, Universit\'e Claude Bernard, Lyon I, Villeurbanne, France}
\email{alberto.farina@u-picardie.fr}
\address{Dipartimento di Matematica\\Universit\`a di Padova\\ Via Trieste 63\\ Padova\\ Italy}
\email{pinamonti@science.unitn.it}
\address{Dipartimento di Matematica\\Universit\`a di Pisa\\ Largo Bruno Pontecorvo 5\\ Pisa\\ Italy}
\email{novaga@dm.unipi.it}
\begin{document}

\begin{abstract}
We prove the one-dimensional symmetry of solutions to 
elliptic equations of the form 
$-\dive(e^{G(x)}a(|\nabla u|)\nabla u)=f(u) e^{G(x)}$, under suitable 
energy conditions.
Our results hold without any restriction on the dimension of the ambient space.
\end{abstract}

\maketitle

\tableofcontents

\section{Introduction}  

In this paper we study the one-dimensional symmetry of solutions to nonlinear equations of the following type:
\begin{align}\label{eqn}
\dive(a(|\nabla u|)\nabla u)+a(|\nabla u|)\left\langle \nabla G(x), \nabla u\right\rangle +f(u)=0,
\end{align} 
or in a more compact form
\begin{align}\label{eqncomp}
-\dive(e^{G(x)}a(|\nabla u|)\nabla u)=f(u) e^{G(x)},
\end{align}
where $f \in C^{1}(\bbR)\footnote{One could consider functions $f$ which are only locally lipschitz continuous, as in  \cite{FSV}. To avoid inessential 
technicalities, we do not treat this case here.}$, $G\in C^{2}(\bbR^n)$ and 
$a\in C^{1,1}_{loc}((0,+\infty))$. We also require that the function $a$ satisfies 
the following structural conditions:
\begin{align}
&a(t)>0\quad \mbox{for any}\ t\in (0,+\infty),\\
&a(t)+a'(t)t>0\quad \mbox{for any}\ t\in (0,+\infty).
\end{align}

Observe that the general form of \eqref{eqncomp} encompasses, as very special cases, many elliptic singular and degenerate equations. Indeed, if $G\equiv 0$ and $a(t)=t^{p-2}$, $1<p<+\infty$, or $a(t)=1/ \sqrt{1+t^2}$ then we obtain the $p$-Laplacian and the mean curvature equations respectively. Moreover, if $a(t)\equiv 1$ and $G(x)=-|x|^2/2$ equation \eqref{eqn} boils down to the classical Ornstein-Uhlenbeck operator for which we refer to \cite{Bog} and the references therein.

To prove the one-dimensional symmetry of solutions we follow the approach introduced in \cite{FarHab} and further developed in \cite{FSV}.  
Following \cite{FarHab, FSV, CNP}, we define $A:\bbR^n\to Mat(n\times n), \lambda_1\in C^0((0,+\infty)),\lambda_G\in C^0(\bbR^{2n})$ as follow
\begin{align}
A_{hk}(\xi):=\frac{a'(|\xi|)}{|\xi|}\xi_h\xi_k+a(|\xi|)\delta_{hk}\quad \mbox{for any}\  1\leq h,k\leq n,
\end{align}
\begin{align}\label{defa}
\lambda_1(t):=a(t)+a'(t)t\quad \mbox{for any}\  t>0
\end{align}
and 
\begin{align}\label{deflambdaG}
\lambda_G(x):=\mbox{maximal eigenvalue of}\ \nabla^2G(x).	
\end{align}
%

\begin{defi} We say that $u$ is a weak solution to (\ref{eqn}) if $u\in C^1(\R^n)$, 
\begin{align}\label{weak}
\int_{\bbR^n}\left\langle a(|\nabla u|)\nabla u, \nabla\varphi\right\rangle-f(u)\varphi\  \ud \mu=0\qquad \forall \varphi\in C^1_c(\bbR^n)
\end{align}
and either (A1) or (A2) is satisfied, where :
\begin{itemize}
\item[(A1)] $\{\nabla u=0\}=\emptyset$. 

\item[(A2)]  $a\in C^0([0,+\infty))$
and
\[
\mbox{the map}\quad t\to ta(t)\quad \mbox{belongs to}\quad C^1([0,+\infty)).
\]
\end{itemize}
\end{defi}
 
\medskip
Notice that \eqref{weak} is well-defined, thanks to  (A1) or (A2).

\smallskip

Notice also that weak solutions to (\ref{eqn}) are critical points of the functional
\begin{align}\label{func}
I(u):=\int_{\bbR^n}\Big(\Lambda(|\nabla u|)+F(u)\Big)\ud \mu
\end{align}
where $F'(t)=-f(t)$, $\ud\mu=e^{G(x)}\ud x$ and
\[
\Lambda(t):=\int_{0}^t a(|\tau|)\tau \ud \tau.
\]

The regularity assumption $u\in C^1(\bbR^n)$ is always fulfilled in many important cases, like those involving the  $p$-Laplacian operator or the mean curvature operator. For instance, when $a(t)=t^{p-2}$, $1<p<+\infty$, any distribution solution $ u \in W^{1,p}_{loc}(\bbR^n) \cap L^{\infty}_{loc}(\bbR^n)$ is of class $C^1$, by the well-known results in \cite{LU68, T84}).  In light of this, and in view of the great generality of the function $a$, it is natural to work in the above setting.

\begin{defi}\label{phstab}
Let $h\in L^1_{loc}(\bbR^n)$ and let $u$ be a weak solution to \eqref{eqn}. We say that $u$ is $h-$stable if
\begin{align}\label{hstab}
\int_{\bbR^n}\left\langle A(\nabla u)\nabla\varphi,\nabla\varphi\right\rangle-f'(u)\varphi^2\ \ud\mu\geq \int_{\bbR^n}a(|\nabla u|)h\varphi^2\ \ud\mu\quad \forall \varphi\in C^{1}_c(\bbR^n).
\end{align}
\end{defi} 



\begin{oss}\rm 
 When $a(t)\equiv 1$, Definition \ref{phstab} boils down to the $h-$stability condition introduced in \cite{CNV,CNP}.

When $h\equiv 0$, then $u$ satisfies the classical stability condition \cite{FarHab, FSV, FV12, ASV2}, and 
we simply say that $u$ is stable. In particular, 
every minimum point of the functional \eqref{func} is a stable solution to \eqref{eqn}.

Let us also point out that, in view of (A1) or (A2),  the integral
\begin{align}
\int_{\bbR^n}\left\langle A(\nabla u)\nabla\varphi,\nabla\varphi\right\rangle-f'(u)\varphi^2- a(|\nabla u|)h\varphi^2\ \ud\mu
\end{align}
is well defined.\footnote{$\,$ cfr. also \cite[footnote 1 at p. 742 and footnote 2 at page 743]{FSV}.} In particular, under the condition (A2) the function $A$ can be extended by continuity at the origin, by setting $ A_{hk}(0):= a(0) \delta_{hk}$.


\end{oss}

\smallskip

We can now state our main symmetry results:

\smallskip

\begin{teo}\label{1Dgen}
Assume $G\in C^2(\bbR^n)$ and $h\in L^1_{loc}(\bbR^n)$ with $h\geq \lambda_G$. Let $u\in C^1(\bbR^n)\cap C^{2}(\{\nabla u\neq 0\})$ with $\nabla u\in H^1_{loc}(\bbR^n)$ be a $h-$stable weak solution to \eqref{eqn}. 
Assume that there exists $C>0$ such that 
\begin{align}\label{stimaaut}
	\lambda_1(t)\leq C a(t)\quad \forall t>0,
\end{align}
and one of the following conditions hold
\begin{enumerate}
	\item[(a)] there exists $C_0\geq 1$ such that $\int_{B_R}a(|\nabla u|)|\nabla u|^2\ud\mu\leq C_0 R^2$ for any $R\geq C_0$,
	\item[(b)] $n=2$ and $u$ satisfies $a(|\nabla u|)|\nabla u|^2e^{G}\in L^{\infty}(\bbR^2)$.
\end{enumerate}
Then $u$ is one-dimensional, i.e. there exists $\omega\in \bbS^{n-1}$ and $u_0:\bbR\to\bbR$ such that 
\begin{align}
u(x)=u_0(\left\langle \omega,x\right\rangle)\quad \forall x\in\bbR^n.
\end{align}
Moreover, 
\begin{align}\label{ugu1}
	\left\langle(h(x) {\rm I}_n-\nabla^2 G(x))\nabla u, \nabla u\right\rangle=0\quad \forall x\in\bbR^n.
\end{align}
In particular, if $u_0$ is not constant, there are $C$ and $g$ of class $C^2$ such that
\begin{align}\label{ugu22}
	G(x)=C(\left\langle \omega,x\right\rangle)+g(x'),
\end{align}
where $x':=x-\left\langle \omega,x\right\rangle\omega$ and $\lambda_G(x)=h(x)=C''(\left\langle \omega,x\right\rangle)$ for all $x\in\bbR^n$.
\end{teo}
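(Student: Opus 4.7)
The strategy follows the Sternberg--Zumbrun/Farina--Sciunzi--Valdinoci scheme, adapted to the weighted, nonlinear setting of \eqref{eqn}.

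\emph{Step 1 (geometric Poincar\'e inequality).} Differentiating \eqref{eqn} with respect to $x_i$ one obtains, on the open set $\{\nabla u\neq 0\}$, a linearized equation for each component $u_i:=\partial_i u$. Testing the $h$-stability inequality \eqref{hstab} with $\varphi=\eta\,|\nabla u|$, suitably regularized near $\{\nabla u=0\}$, multiplying the linearized equation by $\eta^2 u_i$, summing in $i$ and integrating by parts, one arrives, as in \cite{FarHab,FSV,CNP}, at a geometric Poincar\'e-type inequality of the form
\begin{equation}\label{szineq}
\int a(|\nabla u|)\Big(\bigl|\nabla_T|\nabla u|\bigr|^2+|\nabla u|^2\,\mathcal{K}\Big)\eta^2\,\ud\mu+\int\bigl\langle(h\,\mathrm{I}_n-\nabla^2 G)\nabla u,\nabla u\bigr\rangle\, a(|\nabla u|)\,\eta^2\,\ud\mu\leq \int a(|\nabla u|)\,|\nabla u|^2\,|\nabla\eta|^2\,\ud\mu,
\end{equation}
where $\nabla_T$ denotes the tangential gradient along the level sets of $u$ and $\mathcal{K}\geq 0$ is the sum of squared principal curvatures of $\{u=c\}$. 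The bound \eqref{stimaaut} is crucial here: it allows one to absorb the anisotropic piece of $A$ driven by $\lambda_1(|\nabla u|)$ into a right-hand side involving only $a(|\nabla u|)|\nabla u|^2$.

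\emph{Step 2 (vanishing via logarithmic cutoffs).} For $R\gg 1$ take cutoffs $\eta_R\in C^1_c(\bbR^n)$ with $\eta_R\equiv 1$ on $B_{\sqrt R}$, $\eta_R\equiv 0$ outside $B_R$ and $|\nabla\eta_R(x)|\leq C/(|x|\log R)$ on the shell $B_R\setminus B_{\sqrt R}$. Then
\[
\int a(|\nabla u|)|\nabla u|^2|\nabla\eta_R|^2\,\ud\mu\leq \frac{C}{\log^2 R}\int_{B_R\setminus B_{\sqrt R}}\frac{a(|\nabla u|)|\nabla u|^2}{|x|^2}\,\ud\mu.
\]
Under hypothesis~(a), a dyadic decomposition of the shell together with the quadratic growth bound yields that the last integral is $O(\log R)$, so the right-hand side is $O(1/\log R)\to 0$. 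Under~(b), since $n=2$, the $L^\infty$ bound on $a(|\nabla u|)|\nabla u|^2 e^G$ gives
\[
\int_{B_R\setminus B_{\sqrt R}}\frac{a(|\nabla u|)|\nabla u|^2}{|x|^2}\,\ud\mu\leq \norm{a(|\nabla u|)|\nabla u|^2 e^G}\int_{B_R\setminus B_{\sqrt R}}\frac{\ud x}{|x|^2}=O(\log R),
\]
so the right-hand side again tends to zero as $R\to\infty$.

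\emph{Step 3 (symmetry and structure of $G$).} Passing to the limit in \eqref{szineq} and using $h\geq\lambda_G$, each of the three nonnegative integrands on the left vanishes identically. The vanishing of $\mathcal{K}$ together with $|\nabla_T|\nabla u||=0$ forces every connected component of every regular level set $\{u=c\}$ to be an affine hyperplane on which $|\nabla u|$ is constant; a standard connectedness argument (cf.\ \cite{FarHab,FSV}) then produces $\omega\in\bbS^{n-1}$ and $u_0:\bbR\to\bbR$ with $u(x)=u_0(\langle \omega,x\rangle)$. The residual equality yields \eqref{ugu1}. Plugging $\nabla u=u_0'(\langle\omega,x\rangle)\,\omega$ into \eqref{ugu1}, at any point where $u_0'\neq 0$ we get $\langle\nabla^2 G(x)\omega,\omega\rangle=h(x)\geq\lambda_G(x)$; since $\lambda_G$ is the \emph{largest} eigenvalue of $\nabla^2 G$, $\omega$ must be an eigenvector of $\nabla^2 G(x)$ with eigenvalue $h(x)=\lambda_G(x)$. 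Equivalently $\partial_\omega\partial_j G\equiv 0$ for every $j$ perpendicular to $\omega$, and integration along $\omega$ yields the decomposition \eqref{ugu22} with $C''(\langle\omega,x\rangle)=\lambda_G(x)=h(x)$.

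\emph{Main obstacle.} The hardest point is Step~1: deriving \eqref{szineq} in the degenerate/singular nonlinear weighted setting. The admissible test function $\eta|\nabla u|$ is only Lipschitz, and the matrix $A(\nabla u)$ may be singular on $\{\nabla u=0\}$; this forces a careful approximation (e.g.\ replacing $|\nabla u|$ by $\sqrt{|\nabla u|^2+\varepsilon}$), exploiting $\nabla u\in H^1_{loc}$ to justify the integrations by parts away from the critical set, and invoking the dichotomy (A1)/(A2) to pass to the limit. The eigenvalue bound \eqref{stimaaut} is exactly what ensures that the resulting error terms are controlled by $a(|\nabla u|)|\nabla u|^2|\nabla\eta|^2$, making the cutoff argument of Step~2 effective.
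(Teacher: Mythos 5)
Your proposal follows essentially the same approach as the paper: it starts from the geometric Poincar\'e inequality \eqref{prepoi2} of Section~2, uses the eigenvalue bound \eqref{stimaaut} to dominate $\left\langle A\nabla\varphi,\nabla\varphi\right\rangle$ by $(1+C)a|\nabla\varphi|^2$, plugs in the logarithmic cutoff \eqref{test}, sends $R\to\infty$, and then reads off the vanishing of the squared principal curvatures, the tangential gradient of $|\nabla u|$, and the residual term $\left\langle(h\,\mathrm{I}_n-\nabla^2 G)\nabla u,\nabla u\right\rangle$. The only differences are cosmetic: you control the shell integral $\int_{B_R\setminus B_{\sqrt R}}a(|\nabla u|)|\nabla u|^2|x|^{-2}\,\ud\mu$ by a dyadic decomposition where the paper invokes Lemma~\ref{lemfond} (these are equivalent computations), and you obtain \eqref{ugu1} from the same limiting argument rather than invoking \eqref{senza} separately. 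Your handling of the structure of $G$ — using $h\geq\lambda_G$ together with the Rayleigh-quotient rigidity to deduce that $\omega$ is an eigenvector of $\nabla^2 G$, hence $\partial_\omega\nabla_{x'}G\equiv 0$ and the additive decomposition \eqref{ugu22} — is a slightly more explicit version of what the paper does and is correct.
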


\smallskip

\begin{oss}\rm 
Paradigmatic examples satisfying the assumption \eqref{stimaaut} are the $p$-Laplacian operator, for any $ p \in (1,+\infty)$, and the generalized mean curvature operator obtained  by setting $a(t):=(1+t^q)^{-\frac{1}{q}}$, with $q>1$. 
\end{oss}

\smallskip

\begin{teo}\label{propex}
Let $G(x):=-|x|^2/2$, $a(t):=t^{p-2}$ with $p > 1$ and let $u\in C^1(\bbR^n)\cap W^{1,\infty}(\bbR^n)$ be a monotone weak solution to \eqref{eqn}, i.e., such that 
\begin{align}
\partial_i u(x) >0 \quad \forall x\in\bbR^n,
\end{align}
for some $i\in \{1,\ldots, n\}.$

\noindent Suppose that $u$ satisfies either (a) or (b) in Theorem \ref{1Dgen}. Then $u$ is one-dimensional. Moreover, if either $p=2$ or $a(t):=(1+t^q)^{-\frac{1}{q}}$ with $q>1$, then the same conclusion holds for every monotone weak solution $u\in C^1(\bbR^n)\cap L^{\infty}(\bbR^n)$. 
\end{teo}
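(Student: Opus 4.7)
The plan is to reduce both assertions to Theorem~\ref{1Dgen} by establishing the $h$-stability hypothesis with $h = \lambda_G$. Since $G(x) = -|x|^2/2$ has $\nabla^2 G = -{\rm I}_n$, we have $\lambda_G \equiv -1$. The monotonicity $\partial_i u > 0$ forces $\nabla u \neq 0$ on all of $\bbR^n$, so hypothesis (A1) holds and the standard interior regularity theory for the $p$-Laplacian (and for the mean curvature operator away from its degeneracy set) yields $u \in C^2(\bbR^n)$ with $\nabla u \in H^1_{loc}(\bbR^n)$. Differentiating \eqref{eqncomp} in the $x_i$ direction, and using that $\nabla (\partial_i G) = -e_i$ since $G(x) = -|x|^2/2$, a direct computation (based on the cancellation between the differentiated drift term and the term obtained by pulling $\partial_i G$ out of the divergence) shows that $v := \partial_i u$ is a weak solution of the linearized equation
$$-\dive\bigl(e^{G} A(\nabla u)\nabla v\bigr) = \bigl(f'(u) - a(|\nabla u|)\bigr)\, v \, e^{G}.$$

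Testing this identity against $\psi^2/v$ for $\psi \in C^1_c(\bbR^n)$ (a legitimate test function, since $v > 0$), expanding $\nabla(\psi^2/v) = (2\psi/v)\nabla\psi - (\psi/v)^2\nabla v$, and applying Cauchy--Schwarz for the positive-definite form $\langle A(\nabla u)\,\cdot\,,\,\cdot\,\rangle$ to absorb the $\nabla v$ contribution, one arrives at
$$\int_{\bbR^n}\langle A(\nabla u)\nabla\psi,\nabla\psi\rangle\,d\mu - \int_{\bbR^n} f'(u)\psi^2\,d\mu \geq -\int_{\bbR^n} a(|\nabla u|)\psi^2\,d\mu,$$
which is exactly the $h$-stability inequality with $h \equiv -1 = \lambda_G$. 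The structural bound \eqref{stimaaut} is elementary in both cases at hand: $\lambda_1(t) = (p-1)a(t)$ for $a(t) = t^{p-2}$, while $\lambda_1(t) = (1+t^q)^{-1} a(t) \leq a(t)$ for $a(t) = (1+t^q)^{-1/q}$. Since condition (a) or (b) is part of the hypothesis of the first assertion, Theorem~\ref{1Dgen} then delivers the one-dimensional symmetry.

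For the second assertion it remains to verify, without the $W^{1,\infty}$ assumption, that both the preceding stability argument and one of the energy bounds (a), (b) hold for every $u \in C^1(\bbR^n) \cap L^\infty(\bbR^n)$. When $p=2$ the equation reduces to the linear Ornstein--Uhlenbeck operator perturbed by a $C^1$ nonlinearity, so Schauder and Calder\'on--Zygmund estimates furnish local $C^{2,\alpha}$ bounds, and a standard Caccioppoli argument (testing the equation against $u\eta^2$ with $\eta$ a cutoff, together with the finite mass of the Gaussian measure $d\mu$) yields condition (a) with a constant independent of $R$. For the generalized mean curvature operator $a(t) = (1+t^q)^{-1/q}$, classical gradient bounds of Bernstein/Korevaar type provide a pointwise estimate on $|\nabla u|$ in terms of $\|u\|_{L^\infty}$, reducing matters to the first part. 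The principal technical point I expect to check carefully is the rigorous derivation of the linearized equation and the admissibility of $\psi^2/v$ as a test function; monotonicity, by eliminating the degenerate set $\{\nabla u = 0\}$ and ensuring $v > 0$, is precisely what allows all manipulations to proceed without loss.
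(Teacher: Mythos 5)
Your treatment of the first assertion is essentially the paper's own: re-deriving the $h$-stability inequality by differentiating the equation in $x_i$, testing the linearized equation against $\phi^2/u_i$, and using Cauchy--Schwarz for the form $\langle A(\nabla u)\,\cdot\,,\,\cdot\,\rangle$ is precisely the content of Lemma~\ref{stab}, and with $\nabla G_i = -e_i$ this yields $h\equiv -1 = \lambda_G$. The verification of \eqref{stimaaut} in both cases is also correct (your simplification $\lambda_1(t) = (1+t^q)^{-1}a(t)$ agrees with \eqref{po}). Likewise, for $p=2$ your Caccioppoli argument (testing against $u\eta^2$ and absorbing the gradient term via Young's inequality, using $\mu(\bbR^n)<\infty$) does establish condition (a); the paper instead invokes Lunardi's $H^2(\bbR^n,\mu)$ regularity theorem, but your route is a perfectly acceptable alternative. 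The invocation of Schauder and Calder\'on--Zygmund estimates is unnecessary for the energy bound and can be dropped.

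The gap is in the generalized mean curvature case of the second assertion. You propose to appeal to ``classical gradient bounds of Bernstein/Korevaar type'' to obtain $|\nabla u| \in L^\infty$ and thereby reduce to the first part. But no such a priori gradient estimate is established in the paper, and it is far from clear that the classical Korevaar technique applies here: the equation \eqref{eqn} with $G(x) = -|x|^2/2$ carries the \emph{unbounded} drift term $-a(|\nabla u|)\langle x,\nabla u\rangle$, and the operator is the anisotropic $a(t) = (1+t^q)^{-1/q}$, not the standard mean curvature operator. Invoking a Bernstein-type bound here imports a substantial and unverified piece of machinery. The paper's argument is both simpler and avoids any gradient estimate: it uses the elementary observation \eqref{po2} that $ta(t) = t(1+t^q)^{-1/q} \le 1$ for all $t > 0$, so that $a(|\nabla u|)|\nabla u|$ is \emph{automatically} bounded pointwise by $1$ (no regularity beyond $C^1$ needed); feeding this into Lemma~\ref{lemAlb}, together with $u\in L^\infty$ and $\mu(\bbR^n)<\infty$, immediately yields the uniform energy bound \eqref{stimaAlb2} and hence condition (a). You should replace the gradient-bound step by this direct flux bound and the application of Lemma~\ref{lemAlb}.
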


\smallskip

\begin{teo}\label{teomorse}
Let $u$ be a bounded 
weak solution to 
\begin{align}
	\Delta u-\left\langle x,\nabla u\right\rangle+f(u)=0
\end{align}
with Morse index $k$. Then, 
\begin{enumerate}
	\item[(i)] if $k\le 2$ then $u$ is one-dimensional;
	\item[(ii)] if $3\leq k \leq n$ then $u$ is a function of at most $k-1$ variables, i.e. there exists $C\in Mat((k-1)\times n)$ and $u_0:\bbR^{k-1}\to\bbR$ such that
\begin{align}\label{kdim}
	u(x)=u_0(Cx)\quad \forall x\in\bbR^n.
\end{align}
\end{enumerate}
\end{teo}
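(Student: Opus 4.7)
\textbf{The plan is to} bound the number $d$ of effective variables of $u$ by the Morse index $k$ through the spectrum of the self-adjoint linearized operator $L_0 := -\Delta + \langle x,\nabla\rangle - f'(u)$ on $L^2(\R^n, d\mu)$, $d\mu := e^{-|x|^2/2}\ud x$, with quadratic form $Q(\varphi) = \int(|\nabla\varphi|^2 - f'(u)\varphi^2)\,\ud\mu$, and then invoke Theorem~\ref{1Dgen} on the reduced problem. The Morse index $k$ equals the number of strictly negative eigenvalues of $L_0$ in $L^2(d\mu)$, counted with multiplicity.

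\textbf{First} I would set $W := \mathrm{span}\{\partial_i u\}_{i=1}^n \subset L^2(d\mu)$ and $d := \dim W$; after a rotation one may assume $u(x) = u_0(x_1,\ldots,x_d)$, so that $u_0$ is itself a bounded weak solution of the Ornstein--Uhlenbeck equation on $\R^d$ (verified by choosing separable test functions in \eqref{weak}). Boundedness of $u_0$ and elliptic regularity for the Ornstein--Uhlenbeck operator give $\nabla u_0 \in L^2(d\mu_d) \cap H^1_{\mathrm{loc}}$, and differentiating the equation in the direction $e_i$ shows that each $\partial_i u_0$ is an $L^2(d\mu_d)$-eigenfunction of the reduced operator $L_{0,d} := -\Delta + \langle x',\nabla\rangle - f'(u_0)$ on $\R^d$ with eigenvalue $-1$; in particular the multiplicity of $-1$ in the spectrum of $L_{0,d}$ is at least $d$.

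\textbf{Next} I would exploit the tensor decomposition $L_0 = L_{0,d}\otimes I + I\otimes H_{n-d}$ on $L^2(d\mu) = L^2(d\mu_d) \otimes L^2(d\mu_{n-d})$, where $H_{n-d}$ is the pure Ornstein--Uhlenbeck operator on $\R^{n-d}$ with spectrum $\{0,1,2,\ldots\}$ and Hermite-polynomial multiplicity $\binom{n-d-1+j}{j}$ at eigenvalue $j$. Then $\mathrm{spec}(L_0) = \{\mu + j : \mu \in \mathrm{spec}(L_{0,d}),\, j \ge 0\}$, and any eigenvalue $\mu < -1$ of $L_{0,d}$ would contribute at least $1 + (n-d)$ negative eigenvalues of $L_0$ (namely $\mu$ itself together with $\mu+1$ of multiplicity $n-d$); combined with the $d$ copies of $-1$ from $W$, the Morse index would exceed $n$, contradicting $k \le n$. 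Hence $L_{0,d} + I \ge 0$, which is precisely the $(-1)$-stability of $u_0$ on $\R^d$ in the sense of Definition~\ref{phstab} (with $a \equiv 1$). Theorem~\ref{1Dgen} applied to $u_0$ on $\R^d$ with $G(y) = -|y|^2/2$ (so $\lambda_G \equiv -1$), $a \equiv 1$, and $h = -1$ --- the structural bound $\lambda_1 \le Ca$ being trivial and the energy condition (a) following from $\int_{\R^d} |\nabla u_0|^2 d\mu_d < \infty$ by testing the equation against $u_0$ --- yields that $u_0$ is one-dimensional. Therefore $d \le 1$, $u$ itself is one-dimensional on $\R^n$, and this proves (i) for $k \le 2$ and implies (ii) for $3 \le k \le n$.

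\textbf{The main obstacle} will be the tensor-decomposition step: rigorously deducing the spectral inequality $L_{0,d} + I \ge 0$ on $\R^d$ from the Morse-index bound $k \le n$ on $\R^n$. This rests on the compact self-adjoint resolvent of the Ornstein--Uhlenbeck operator in $L^2(d\mu)$ (so that the spectrum is discrete and the Morse index genuinely equals the multiplicity count), on the density of separable test functions in $C^1_c(\R^n)$ (so that the spectral inequality transfers back to the weak $h$-stability of Definition~\ref{phstab}), and on a careful accounting of the Hermite multiplicities in $L^2(d\mu_{n-d})$. Once these technicalities are settled, the reduction to Theorem~\ref{1Dgen} is essentially automatic.
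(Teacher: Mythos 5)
Your proposal is correct, and it takes a genuinely different route from the paper. The paper's proof for $2\leq k\leq n$ works in the full $n$-dimensional space: it notes that the span $X$ of the nonzero derivatives $u_i$ yields a subspace on which $Q_u<0$, hence $\dim X\le k$; assuming $u$ depends on exactly $k$ variables, it then shows the smallest eigenvalue of the full operator $L$ is $-1$ by the crude observation that a further eigenvalue $\lambda<-1$ would give a $(k+1)$-dimensional negative subspace; $(-1)$-stability plus Theorem~\ref{1Dgen} then yields a contradiction, so $u$ depends on at most $k-1$ variables. In contrast, you restrict $u$ to the $d$ effective variables, and then exploit the tensor decomposition $L_0 = L_{0,d}\otimes I + I\otimes H_{n-d}$ together with the Hermite multiplicities $\binom{n-d-1+j}{j}$ of the pure Ornstein--Uhlenbeck operator on the transverse factor. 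This gives a sharper count: if $L_{0,d}$ had an eigenvalue $\mu<-1$, then $\mu$, $\mu+1$ (with multiplicity $n-d$), and $-1$ (with multiplicity $\ge d$) give at least $n+1$ independent negative directions for $L_0$, forcing $k>n$. Hence $L_{0,d}\ge -I$, and Theorem~\ref{1Dgen} on $\bbR^d$ gives $d\le 1$.

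It is worth pointing out that your approach actually yields a \emph{stronger} conclusion than the paper states: you show that any bounded solution with Morse index $k\le n$ is one-dimensional, whereas Theorem~\ref{teomorse}(ii) only asserts that $u$ depends on at most $k-1$ variables when $3\le k\le n$. This is not an error on your side: the tensor-product spectral bookkeeping (with the extra $n-d$ eigenfunctions coming from the first Hermite level on the inert variables) is precisely what the paper does not exploit, and it is what produces the extra strength. The technical points you flag (compact resolvent, reduction of the weak equation to $\bbR^d$ via separable test functions, $\nabla u_0\in L^2(d\mu_d)$ via Lunardi's $H^2$-regularity so that condition (a) of Theorem~\ref{1Dgen} holds, and density of $C^1_c$ in $H^1(d\mu)$ to pass between the spectral and weak formulations of $(-1)$-stability) are all standard and surmountable, exactly as you indicate. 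So the proof is sound, just genuinely different from -- and sharper than -- the one in the paper.
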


\section{A geometric Poincar\'e inequality}

We start by recalling the following Lemma which has been proved in \cite{FSV}.
\begin{lem}
For any $\xi\in \bbR^n\setminus \{0\}$, the matrix $A(\xi)$ is symmetric and positive definite and its eigenvalues are 
$\lambda_1(|\xi|),\cdots, \lambda_n(|\xi|)$, where $\lambda_1$ is as in \eqref{defa} and $\lambda_i(t)=a(t)$ for every $i=2,\ldots, n$. Moreover,
\begin{align}
\left\langle A(\xi)\xi,\xi\right\rangle=|\xi|^2\lambda_1(|\xi|),
\end{align}
and
\begin{align}\label{posdef}
0\leq \left\langle A(\xi)(V-W),(V-W)\right\rangle=\left\langle A(\xi)V,V\right\rangle+\left\langle A(\xi)W,W\right\rangle-2\left\langle A(\xi)V,W \right\rangle,
\end{align}
for any $V,W\in\bbR^n$ and any $\xi \in \bbR^n\setminus\{0\}$.
\end{lem}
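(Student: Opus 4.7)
The plan is essentially a short linear-algebra computation exploiting the rank-one structure of $A(\xi)$. First I would rewrite the definition of the matrix in intrinsic form as
\[
A(\xi) \;=\; a(|\xi|)\,\mathrm{I}_n \;+\; \frac{a'(|\xi|)}{|\xi|}\,\xi\otimes \xi,
\]
which presents it as a rank-one perturbation of a scalar multiple of the identity. Symmetry then follows at once from the symmetry of the tensor $\xi\otimes\xi$ and of $\mathrm{I}_n$.

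For the spectral description, I would note that $\xi\otimes\xi$ has eigenvalue $|\xi|^2$ on $\spa\{\xi\}$ and eigenvalue $0$ on the orthogonal hyperplane $\xi^{\perp}$, and that these two eigenspaces are preserved when $a(|\xi|)\mathrm{I}_n$ is added. This yields the eigenvalue
\[
a(|\xi|) + \frac{a'(|\xi|)}{|\xi|}\cdot|\xi|^2 \;=\; a(|\xi|)+a'(|\xi|)\,|\xi| \;=\; \lambda_1(|\xi|)
\]
in the direction of $\xi$, and the eigenvalue $a(|\xi|)$ with multiplicity $n-1$ on $\xi^{\perp}$, matching the stated definition of the $\lambda_i$. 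Positive definiteness is then immediate from the two structural assumptions $a(t)>0$ and $a(t)+a'(t)t>0$ for $t>0$.

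The identity $\langle A(\xi)\xi,\xi\rangle = |\xi|^2\lambda_1(|\xi|)$ is now a one-line consequence: since $\xi$ is an eigenvector of $A(\xi)$ with eigenvalue $\lambda_1(|\xi|)$, we have $A(\xi)\xi = \lambda_1(|\xi|)\xi$, and pairing with $\xi$ gives the claim. Finally, \eqref{posdef} is obtained by applying positive semi-definiteness of $A(\xi)$ to the vector $V-W$ and expanding via the bilinearity and symmetry of the quadratic form $(V,W)\mapsto\langle A(\xi)V,W\rangle$.

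I do not expect any real obstacle: the whole argument is elementary linear algebra combined with the sign hypotheses on $a$. The only minor point requiring attention is the orthogonality of the eigenspaces $\spa\{\xi\}$ and $\xi^{\perp}$, which is what allows the rank-one part and the identity part of $A(\xi)$ to be diagonalized in the same basis.
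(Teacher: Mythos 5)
Your proof is correct: the rewriting $A(\xi)=a(|\xi|)\,\mathrm{I}_n+\frac{a'(|\xi|)}{|\xi|}\,\xi\otimes\xi$, the spectral decomposition along $\spa\{\xi\}$ and $\xi^{\perp}$, and the expansion of the quadratic form for \eqref{posdef} are all sound, and this is exactly the standard argument. The paper does not reprove this lemma but simply cites \cite{FSV}, so there is no in-text proof to compare against; your argument matches the one given there.
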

\begin{lem}\label{eqnlin}
Let $u\in C^1(\bbR^n)\cap C^2(\{\nabla u\neq 0\})$ with $\nabla u\in H^1_{loc}(\bbR^n)$ be a weak solution to (\ref{eqn}). 
Then for any $i=1,\ldots, n$, and any $\varphi\in C^1_c(\R^n)$ we have
\begin{align}\label{eqngen}
\int_{\bbR^n} \left\langle A(\nabla u)\nabla u_i, \nabla\varphi \right\rangle-a(|\nabla u|)\left\langle \nabla u,\nabla(G_i)\right\rangle\varphi-f'(u)u_i\varphi\  \ud\mu =0.
\end{align}
\end{lem}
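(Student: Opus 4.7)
The plan is to obtain \eqref{eqngen} by differentiating the weak formulation \eqref{weak} with respect to $x_i$. Concretely, I would take $\varphi\in C^2_c(\bbR^n)$ and insert $\psi:=-\partial_i\varphi$ in \eqref{weak}, obtaining
\[
\int_{\bbR^n}\bigl[-a(|\nabla u|)\langle\nabla u,\nabla\partial_i\varphi\rangle+f(u)\partial_i\varphi\bigr]\,e^G\,dx=0.
\]
I would then transfer the $x_i$-derivative off of $\varphi$ by integration by parts. Since $\nabla u\in H^1_{loc}(\bbR^n)$ and, under (A1) or (A2), the map $\xi\mapsto a(|\xi|)\xi_k$ is locally Lipschitz on $\bbR^n$ (using the continuous extension $A_{hk}(0)=a(0)\delta_{hk}$), the product $a(|\nabla u|)u_k\,e^G$ lies in $W^{1,1}_{loc}(\bbR^n)$ with weak derivative $A_{kj}(\nabla u)u_{ij}\,e^G+a(|\nabla u|)u_k\,G_i\,e^G$. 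An analogous computation for the $f$-term produces $-(f'(u)u_i+f(u)G_i)\,e^G\varphi$, so that after rearranging one is left with
\[
\int_{\bbR^n}\bigl[\langle A(\nabla u)\nabla u_i,\nabla\varphi\rangle+a(|\nabla u|)G_i\langle\nabla u,\nabla\varphi\rangle-f'(u)u_i\,\varphi-f(u)G_i\,\varphi\bigr]\,e^G\,dx=0.
\]

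The two extra $G_i$-terms would then be eliminated by applying \eqref{weak} itself to the admissible test function $G_i\varphi\in C^1_c(\bbR^n)$ (admissible since $G\in C^2$): expanding $\nabla(G_i\varphi)=\varphi\nabla G_i+G_i\nabla\varphi$ in \eqref{weak} gives
\[
\int_{\bbR^n}\bigl[a(|\nabla u|)G_i\langle\nabla u,\nabla\varphi\rangle+a(|\nabla u|)\langle\nabla u,\nabla G_i\rangle\,\varphi-f(u)G_i\,\varphi\bigr]\,e^G\,dx=0.
\]
Subtracting this from the previous identity produces exactly \eqref{eqngen} for every $\varphi\in C^2_c(\bbR^n)$. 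Since each term in \eqref{eqngen} is continuous in $\varphi$ with respect to the $C^1_c$-topology, a standard mollification/density argument then extends the identity to all $\varphi\in C^1_c(\bbR^n)$.

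The only delicate step is the chain-rule identity $\partial_i(a(|\nabla u|)u_k)=A_{kj}(\nabla u)u_{ij}$ at points where $\nabla u$ vanishes. Under (A1) this set is empty and the formula is classical on $\{\nabla u\neq 0\}$, where $u\in C^2$. Under (A2) the assumption that $t\mapsto ta(t)$ is $C^1$ on $[0,+\infty)$ together with $a\in C^0([0,+\infty))$ delivers the claimed local Lipschitz extension of $\xi\mapsto a(|\xi|)\xi_k$ to all of $\bbR^n$, while Stampacchia's theorem applied componentwise to each $u_k\in H^1_{loc}(\bbR^n)$ yields $u_{ij}=0$ a.e.\ on $\{\nabla u=0\}$. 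Hence both sides of the chain-rule identity vanish almost everywhere on that set, and the formula holds a.e.\ in $\bbR^n$, which is precisely what legitimates the integrations by parts carried out above. This is the only point where some care is required; the rest of the argument is essentially algebraic manipulation of the weak equation.
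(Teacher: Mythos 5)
Your argument is correct and mirrors the paper's proof: both differentiate the weak formulation (inserting $\pm\partial_i\varphi$ as test function), use the $W^{1,1}_{loc}$ regularity of $a(|\nabla u|)\nabla u\,e^G$ together with Stampacchia's theorem to justify the chain-rule identity $\partial_i\bigl(a(|\nabla u|)u_k\,e^G\bigr)=\bigl(A_{kj}(\nabla u)u_{ij}+a(|\nabla u|)u_k\,G_i\bigr)e^G$ across the set $\{\nabla u=0\}$, and then cancel the residual $G_i$-terms by applying the weak equation with test function $G_i\varphi$. The only presentational difference is that you are explicit about starting with $\varphi\in C^2_c$ and closing the argument by density, whereas the paper leaves this implicit.
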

\begin{proof}
By Lemma $2.2$ in \cite{FSV} we have
\begin{align}
\mbox{the map}\quad x\to W(x):=a(|\nabla u(x)|)\nabla u(x)\quad \mbox{belongs to}\ W^{1,1}_{loc}(\bbR^n,\bbR^n),
\end{align}
therefore, since $e^{G(x)}\in C^{2}(\bbR^n)$ we get  
\begin{align}\label{wq}
We^{G}\in W^{1,1}_{loc}(\bbR^n,\bbR^n).
\end{align}
By Stampacchia's Theorem (see, e.g. \cite[Theorem 6:19]{LL}), we get $\partial_i(We^{G})=0$ for almost any $x\in \{We^G=0\}=\{W=0\}$, that is
\[
\partial_i(We^{G})=0
\]
for almost any $x\in \{\nabla u=0\}$. In the same way, by Stampacchia's Theorem and (A2), it can be proven that $\nabla u_i(x)=0$, and hence $A(\nabla u(x))\nabla u_i(x)=0$, for almost any $x\in \{\nabla u=0\}$.
Moreover, the following relation holds (see \cite{FSV} for the proof)
\begin{align}
\partial_i(W e^{G})=(A(\nabla u)\nabla u_i + a(|\nabla u|)\nabla uG_i)e^{G} \quad\ \mbox{on}\ \{\nabla u\neq 0\},
\end{align}
and thanks to the previous observations
\begin{align}\label{wq2}
	\partial_i(W e^{G})=(A(\nabla u)\nabla u_i + a(|\nabla u|)\nabla uG_i)e^{G} \quad a.e.\ \mbox{in}\ \bbR^n.
\end{align}
Applying (\ref{weak}) with $\varphi$ replaced by $\varphi_i$ and making use of \eqref{wq} and \eqref{wq2}, we obtain
\begin{align*}
0&=\int_{\bbR^n}a(|\nabla u|)\left\langle \nabla u,\nabla\varphi_i\right\rangle+f(u)\varphi_i\ \ud\mu\\
&=-\int_{\bbR^n}\left\langle A(\nabla u)\nabla u_i,\nabla\varphi\right\rangle+a(|\nabla u|)\left\langle \nabla u,\nabla \varphi\right\rangle G_i\ \ud\mu\\
&-\int_{\bbR^n}f'(u)u_i\varphi+f(u)\varphi G_i\ \ud\mu\\
&=-\int_{\bbR^n}\left\langle A(\nabla u)\nabla u_i,\nabla\varphi\right\rangle+a(|\nabla u|)\left\langle \nabla u,\nabla( \varphi G_i)\right\rangle \ud\mu\\
&-\int_{\bbR^n}-a(|\nabla u|)\left\langle \nabla u, \nabla G_i\right\rangle \varphi+f'(u)u_i\varphi+f(u)\varphi G_i\ \ud\mu.
\end{align*}
Recalling (\ref{weak}), applied with $\varphi$ replaced by $\varphi G_i$, we obtain the thesis. 
\end{proof}

>From now on,  we use $A$ and $a$,  as a short-hand notation for $A(\nabla u)$ and $a:=a(|\nabla u|)$ respectively. 

In the following result we prove that every monotone solution to \eqref{eqn} is indeed $h-$stable.
\begin{lem}\label{stab}
Assume that $u$ is a weak solution to \eqref{eqn} and that there exists $i\in \{1,\ldots, n\}$ such that
\begin{align}\label{monot}
u_i:= \partial_i u(x) >0 \quad \forall x\in\bbR^n
\end{align}
then $u$ is $h-$stable, with 
\[
h(x):=\frac{\left\langle \nabla u(x), \nabla G_i(x)\right\rangle}{u_i(x)}
\]
\end{lem}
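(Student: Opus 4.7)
The plan is to test the linearized equation \eqref{eqngen} of Lemma \ref{eqnlin}, evaluated at the index $i$ of \eqref{monot}, with $\psi:=\varphi^2/u_i$ for an arbitrary $\varphi\in C^1_c(\bbR^n)$, and then to exploit the positivity of $A(\nabla u)$ via \eqref{posdef}. Observe that the hypothesis $u_i>0$ on all of $\bbR^n$ immediately gives $\nabla u\neq 0$ everywhere, so the matrix $A(\nabla u)$ is well-defined and uniformly positive definite on every compact set, and $1/u_i$ is continuous and locally bounded.

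The first step is to verify that $\psi$ is an admissible test function. Since $\nabla u\in H^1_{loc}(\bbR^n)$ we have $\nabla u_i\in L^2_{loc}$, while $u_i$ is continuous and strictly positive; hence $\psi$ is bounded with compact support and
\[
\nabla\psi=\frac{2\varphi}{u_i}\nabla\varphi-\frac{\varphi^2}{u_i^2}\nabla u_i\in L^2(\bbR^n).
\]
Under (A1) or (A2) the integrands in \eqref{eqngen} are meaningful for test functions in $H^1$ with compact support, so a standard density argument permits the substitution. Inserting $\psi$ into \eqref{eqngen} and collecting terms yields the identity
\[
\int_{\bbR^n}\frac{2\varphi}{u_i}\langle A\nabla u_i,\nabla\varphi\rangle\,\ud\mu-\int_{\bbR^n}\frac{\varphi^2}{u_i^2}\langle A\nabla u_i,\nabla u_i\rangle\,\ud\mu=\int_{\bbR^n}\left(a\,\frac{\langle\nabla u,\nabla G_i\rangle}{u_i}+f'(u)\right)\varphi^2\,\ud\mu.
\]

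The second step is the pointwise application of \eqref{posdef} with $V=\nabla\varphi$ and $W=(\varphi/u_i)\nabla u_i$, which, after dividing by $2$ and rearranging, gives
\[
\frac{2\varphi}{u_i}\langle A\nabla u_i,\nabla\varphi\rangle-\frac{\varphi^2}{u_i^2}\langle A\nabla u_i,\nabla u_i\rangle\le\langle A\nabla\varphi,\nabla\varphi\rangle.
\]
Integrating against $\ud\mu$, combining with the identity above, and recalling that $h(x)=\langle\nabla u(x),\nabla G_i(x)\rangle/u_i(x)$, one obtains precisely the $h$-stability inequality \eqref{hstab}.

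The only subtlety I foresee is the justification of $\psi=\varphi^2/u_i$ as a test function: in the classical situation one has to avoid the critical set $\{\nabla u=0\}$ by cut-off regularizations, but here the strict monotonicity \eqref{monot} removes that set entirely, so the density extension to compactly supported $H^1$ functions is direct and no further truncation is required.
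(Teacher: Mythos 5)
Your proof is correct and follows essentially the same route as the paper: test the linearized equation \eqref{eqngen} with $\psi=\varphi^2/u_i$ and close the estimate with the positivity of $A(\nabla u)$ from \eqref{posdef}, using the pointwise bound $\frac{2\varphi}{u_i}\langle A\nabla u_i,\nabla\varphi\rangle-\frac{\varphi^2}{u_i^2}\langle A\nabla u_i,\nabla u_i\rangle\le\langle A\nabla\varphi,\nabla\varphi\rangle$. The paper additionally records that \eqref{monot} forces $\{\nabla u=0\}=\emptyset$, so $u\in C^2$ by classical elliptic regularity and Lemma~\ref{eqnlin} is applicable; you address the same admissibility issue slightly differently via a density argument, but the substance is identical.
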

\begin{proof}
Let $\phi\in C^{\infty}_c(\bbR^n)$ and $\psi:=\phi^2/u_i$. 
We use \eqref{posdef} with $V:=\phi\nabla u_i/u_i$ and $W:=\nabla \phi$ to obtain that
\[
\frac{2\phi}{u_i}\left\langle A\nabla u_i,\nabla\phi\right\rangle-\frac{\phi^2}{u_i^2}\left\langle A\nabla u_i,\nabla u_i\right\rangle\leq \left\langle A\nabla \phi,\nabla\phi\right\rangle.
\]
>From this and Lemma \ref{eqnlin} we get
\begin{align}
0&=\int \left\langle A\nabla u_i,\nabla \psi\right\rangle-a\left\langle \nabla u,\nabla G_i\right\rangle\psi-f'(u)u_i\psi\ \ud\mu\\
\nonumber
&=\int 2\frac{\phi}{u_i}\left\langle A\nabla u_i,\nabla \phi\right\rangle-\frac{\phi^2}{u_i^2}\left\langle A\nabla u_i,\nabla u_i\right\rangle-a\frac{\phi^2}{u_i}\left\langle \nabla u,\nabla G_i\right\rangle-f'(u)\phi^2\ \ud\mu\\
\nonumber
&\leq \int \left\langle A\nabla\phi,\nabla\phi\right\rangle-a\frac{\phi^2}{u_i}\left\langle \nabla u,\nabla G_i\right\rangle-f'(u)\phi^2\ \ud\mu.
\end{align}
Notice that we can apply Lemma \ref{eqnlin} since, in view of  \eqref{monot}, $u$ has no critical points and thus it is of class $C^2$, by the classical regularity results. 
\end{proof}
The following Lemma can be proved using the same tecniques implemented in \cite[Lemma 2.4]{FSV}, 
\begin{lem} \label{approtest}
Let $h\in L^1_{loc}(\bbR^n)$. Let $u\in C^1(\bbR^n)\cap C^2(\{\nabla u\neq 0\})$ with $\nabla u\in H^1_{loc}(\bbR^n)$ be a $h-$stable weak solution to \eqref{eqn}. Then, \eqref{hstab} holds for any $\varphi\in H^1_0(B)$ and for any ball $B\subset \bbR^n$. Moreover, under the assumptions of Lemma \ref{eqnlin},
	\begin{align}\label{eqngen23}
\int_{\bbR^n} \left\langle A(\nabla u)\nabla u_i, \nabla\varphi \right\rangle-a(|\nabla u|)\left\langle \nabla u,\nabla(G_i)\right\rangle\varphi-f'(u)u_i\varphi\  \ud\mu =0.
\end{align}
for any $i=1,\ldots,n$, any $\varphi\in H^1_0(B)$ and any ball $B\subset\bbR^n$.
\end{lem}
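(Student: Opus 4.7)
The plan is a density argument extending \eqref{hstab} and \eqref{eqngen} from $C^1_c(\bbR^n)$ to $H^1_0(B)$ for any ball $B\subset\bbR^n$. Given $\varphi\in H^1_0(B)$, I would take approximants $\varphi_k\in C^{\infty}_c(B)\subset C^1_c(\bbR^n)$ with $\varphi_k\to\varphi$ in $H^1(\bbR^n)$, apply each identity to $\varphi_k$ in place of $\varphi$, and pass to the limit term by term.

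The essential observation is that, under the standing hypotheses, every coefficient entering the integrals is continuous on $\bbR^n$ and hence bounded on $\bar B$. Indeed, under (A1) $\nabla u$ never vanishes, so $A(\nabla u)$ and $a(|\nabla u|)$ are continuous on $\bbR^n$; under (A2) both extend continuously to the origin with $A_{hk}(0)=a(0)\delta_{hk}$. Combined with $u\in C^1(\bbR^n)$, $f\in C^1(\bbR)$ and $G\in C^2(\bbR^n)$, this yields local boundedness of $A(\nabla u)$, $a(|\nabla u|)$, $f'(u)$, $\nabla u$, $\nabla G_i$ and $e^G$ on $\bar B$. Using these bounds together with $\nabla u_i\in L^2(B)$ (from $\nabla u\in H^1_{loc}$) and Cauchy--Schwarz, all the integrals appearing in \eqref{hstab}, \eqref{eqngen} and \eqref{eqngen23}---except for the $h$-term---define continuous (bi)linear or quadratic forms on $H^1_0(B)$, namely the maps
\[
v\mapsto\int_B\langle A\nabla v,\nabla v\rangle\,e^G\,dx,\quad v\mapsto\int_B f'(u)\,v^2\,e^G\,dx,\quad v\mapsto\int_B\langle A\nabla u_i,\nabla v\rangle\,e^G\,dx,\quad v\mapsto\int_B a\,\langle\nabla u,\nabla G_i\rangle\,v\,e^G\,dx.
\]
Strong $H^1$-convergence $\varphi_k\to\varphi$ therefore transfers to convergence of all these integrals, which immediately yields \eqref{eqngen23} for every $\varphi\in H^1_0(B)$.

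The main obstacle is the single remaining term $\int_{\bbR^n}a(|\nabla u|)\,h\,\varphi_k^2\,d\mu$ in the stability inequality: since $h$ is only assumed to lie in $L^1_{loc}$, $L^2$-convergence of $\varphi_k^2$ does not by itself carry this integral through the limit. I would resolve this by a standard truncation trick. First, treat the case $\varphi\in H^1_0(B)\cap L^{\infty}(B)$: set $M:=\norm{\varphi}+1$ and replace $\varphi_k$ by $T_M\varphi_k$, where $T_M$ denotes truncation at levels $\pm M$. Up to a subsequence, the new sequence still lies in $C^1_c(B)$, still converges to $\varphi$ in $H^1(B)$ (since $|\varphi|<M$ a.e. forces $\chi_{\{|\varphi_k|<M\}}\to 1$ a.e.), and is uniformly bounded by $M$; hence dominated convergence with majorant $M^2\,a(|\nabla u|)\,|h|\,e^G\in L^1(B)$ applies and delivers the missing limit. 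The inequality for a general (unbounded) $\varphi\in H^1_0(B)$ then follows by approximating $\varphi$ by $T_M\varphi$ and letting $M\to+\infty$, which is legitimate whenever both sides of \eqref{hstab} are finite---a condition automatically satisfied in the applications to Theorem \ref{1Dgen}.
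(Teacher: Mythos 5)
Your proof follows essentially the same route as the paper, which simply invokes the density/approximation technique of \cite[Lemma 2.4]{FSV}: approximate $\varphi\in H^1_0(B)$ by smooth compactly supported functions and pass to the limit using the local boundedness of $A(\nabla u)$, $a(|\nabla u|)$, $f'(u)$, $\nabla G_i$ and $e^G$ on $\bar B$, with a truncation-plus-dominated-convergence step to handle the term involving $h\in L^1_{loc}$. The only slip is the claim that the truncated functions $T_M\varphi_k$ still lie in $C^1_c(B)$ --- a hard truncation of a $C^1$ function is only Lipschitz; this is repaired in the standard way by using a smooth truncation $S_M$ with $S_M(t)=t$ for $|t|\le M$ and $|S_M'|\le 1$ (or by first extending \eqref{hstab} to Lipschitz compactly supported test functions via mollification), after which your argument goes through unchanged.
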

\begin{prop}
Let $h\in L^1_{loc}(\bbR^n)$ and $u\in C^1(\bbR^n)\cap C^2(\{\nabla u\neq 0\})$ with $\nabla u\in H^1_{loc}(\bbR^n)$ be a $h-$stable weak solution to (\ref{eqn}). 
Then, for every $\varphi\in C^1_c(\bbR^n)$ it holds
\begin{align}
 \int_{\bbR^n}a(|\nabla u|) h(x)|\nabla u|^2\varphi^2\ \ud\mu&\leq \int_{\bbR^n}|\nabla u|^2\left\langle A\nabla\varphi,\nabla\varphi\right\rangle+a(|\nabla u|)\left\langle\nabla^2 G \nabla u,\nabla u\right\rangle\varphi^2\\
\nonumber
&+\varphi^2\Big[\left\langle A\nabla|\nabla u|,\nabla|\nabla u|\right\rangle-\sum_{i=1}^n\left\langle A(\nabla u)\nabla u_i, \nabla u_i \right\rangle\Big]\ud\mu.
\end{align}
\end{prop}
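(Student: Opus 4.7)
The plan is to combine the linearized equation from Lemma \ref{approtest} with the $h$-stability inequality by inserting well-chosen test functions and then cancelling the troublesome term involving $f'(u)$.

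First I would apply the linearized equation \eqref{eqngen23} with the test function $u_i \varphi^2 \in H^1_0(B)$ (which is admissible because $u_i \in H^1_{loc}$ and $\varphi \in C^1_c$). Expanding $\nabla(u_i\varphi^2) = \varphi^2\nabla u_i + 2u_i\varphi\nabla\varphi$ yields
\begin{align*}
\int_{\bbR^n}\varphi^2\langle A\nabla u_i,\nabla u_i\rangle + 2u_i\varphi\langle A\nabla u_i,\nabla\varphi\rangle - a\langle\nabla u,\nabla G_i\rangle u_i\varphi^2 - f'(u)u_i^2\varphi^2\ \ud\mu = 0.
\end{align*}
Summing over $i=1,\dots,n$ and using the two elementary identities $\sum_i u_i \nabla u_i = |\nabla u|\nabla|\nabla u|$ and $\sum_i u_i \langle\nabla u,\nabla G_i\rangle = \langle\nabla^2 G\,\nabla u,\nabla u\rangle$ (both of which hold a.e.\ on $\bbR^n$, thanks to Stampacchia's theorem on the set $\{\nabla u = 0\}$), one obtains an explicit expression
\begin{align*}
\int_{\bbR^n} f'(u)|\nabla u|^2\varphi^2\ \ud\mu = \int_{\bbR^n}\varphi^2\sum_i\langle A\nabla u_i,\nabla u_i\rangle + 2\varphi|\nabla u|\langle A\nabla|\nabla u|,\nabla\varphi\rangle - a\langle\nabla^2 G\,\nabla u,\nabla u\rangle\varphi^2\ \ud\mu.
\end{align*}

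Next I would apply the $h$-stability inequality \eqref{hstab}, in the enlarged class granted by Lemma \ref{approtest}, with the test function $|\nabla u|\varphi \in H^1_0(B)$ (admissible since $|\nabla u|\in H^1_{loc}$ by the chain rule for Sobolev functions applied to $\nabla u$). Expanding
\begin{align*}
\langle A\nabla(|\nabla u|\varphi),\nabla(|\nabla u|\varphi)\rangle = \varphi^2\langle A\nabla|\nabla u|,\nabla|\nabla u|\rangle + 2\varphi|\nabla u|\langle A\nabla|\nabla u|,\nabla\varphi\rangle + |\nabla u|^2\langle A\nabla\varphi,\nabla\varphi\rangle
\end{align*}
and substituting the expression for $\int f'(u)|\nabla u|^2\varphi^2\,\ud\mu$ obtained above makes the two \emph{mixed} terms $2\varphi|\nabla u|\langle A\nabla|\nabla u|,\nabla\varphi\rangle$ cancel exactly. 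Rearranging the surviving terms gives precisely the asserted Poincaré-type inequality.

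The approach is essentially routine once the test functions $u_i\varphi^2$ and $|\nabla u|\varphi$ are fixed; the mild technical subtlety is justifying their use. This is where the regularity assumption $\nabla u\in H^1_{loc}(\bbR^n)$ and the extension of \eqref{hstab} and \eqref{eqngen} to $H^1_0(B)$ test functions, provided by Lemma \ref{approtest}, are essential. One must also verify that on the set $\{\nabla u=0\}$, where $A(\nabla u)$ may not be defined under (A1), every term multiplying $A$ involves a factor ($u_i$, $|\nabla u|$, $\nabla u_i$, or $\nabla|\nabla u|$) which vanishes almost everywhere on that set; this is the content of the Stampacchia-type remarks already exploited in the proof of Lemma \ref{eqnlin}, so no additional work is required.
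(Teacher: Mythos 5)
Your proposal is correct and follows essentially the same route as the paper's proof: you use the linearized equation from Lemma \ref{approtest} with test function $u_i\varphi^2$ summed over $i$, the $h$-stability inequality in the enlarged $H^1_0(B)$ class with test function $|\nabla u|\varphi$, and observe the cancellation of the mixed term $2\varphi|\nabla u|\langle A\nabla|\nabla u|,\nabla\varphi\rangle$. The only stylistic difference is that you first solve explicitly for $\int f'(u)|\nabla u|^2\varphi^2\,\ud\mu$ and substitute, while the paper keeps the $f'(u)$ terms in both displayed lines and cancels them at the end; the Stampacchia-type remarks on $\{\nabla u=0\}$ are invoked in the same way.
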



\begin{proof}
We start observing that by Stampacchia's Theorem, since $\mu<<\LL^{n}$, we get
\begin{align}
&\nabla|\nabla u|(x)=0\quad \mu-\mbox{a.e.}\ x\in \{|\nabla u|=0\}\\
&\nabla u_j(x)=0\quad \mu-\mbox{a.e.}\ x\in \{|\nabla u|=0\}\subseteq \{u_j=0\},
\end{align}
for any $j=1,\ldots, n$.
Let $\varphi\in C^1_c(\bbR^n)$ and $i=1,\ldots,n$. Using (\ref{eqngen}) with test function $u_i\varphi^2$ and summing over $i=1,\ldots, n$ we get
\begin{align}\label{we4}
\int_{\bbR^n}\sum_{i=1}^n \left\langle A(\nabla u)\nabla u_i, \nabla(u_i\varphi^2) \right\rangle -f'(u)|\nabla u|^2\varphi^2\  \ud\mu =\int_{\bbR^n}a(|\nabla u|)\left\langle \nabla^2 G\nabla u,\nabla u\right\rangle \varphi^2\ \ud\mu
\end{align}
Using (\ref{hstab}) with test function $|\nabla u|\varphi$ (note that this choice is possible thanks to Lemma \ref{approtest}) we then get
\begin{align}
 \int_{\bbR^n}a(|\nabla u|)h(x)|\nabla u|^2\varphi^2\ \ud\mu&\leq \int_{\bbR^n}\left\langle\Big(A(\nabla u(x))\nabla(|\nabla u|\varphi)\Big),\nabla(|\nabla u|\varphi)\right\rangle-f'(u)|\nabla u|^2\varphi^2\ \ud\mu\\
\nonumber
&=\int_{\bbR^n}|\nabla u|^2\left\langle A\nabla\varphi,\nabla\varphi\right\rangle\ud\mu+\int_{\{\nabla u\neq 0\}}\varphi^2\left\langle A\nabla|\nabla u|,\nabla|\nabla u|\right\rangle\\
\nonumber
&+2\varphi|\nabla u|\left\langle A\nabla\varphi,\nabla|\nabla u|\right\rangle-f'(u)|\nabla u|^2\varphi^2\ \ud\mu
\end{align}
and by (\ref{we4}) we conclude that
\begin{align}\label{prepoi}
 \int_{\bbR^n}a(|\nabla u|)h(x)|\nabla u|^2\varphi^2\ \ud\mu&\leq \int_{\bbR^n}|\nabla u|^2\left\langle A\nabla\varphi,\nabla\varphi\right\rangle\ud\mu+\int_{\{\nabla u\neq 0\}}a(|\nabla u|)\left\langle\nabla^2 G \nabla u,\nabla u\right\rangle\varphi^2\ud\mu\\
\nonumber
&+\int_{\{\nabla u\neq 0\}}\varphi^2\Big[\left\langle A\nabla|\nabla u|,\nabla|\nabla u|\right\rangle-\sum_{i=1}^n\left\langle A(\nabla u)\nabla u_i, \nabla u_i \right\rangle\Big]\ud\mu.
\end{align}
which is the thesis.
\end{proof}

\begin{oss}\rm \label{rempoi}
Letting
\[
L_{u,x}:=\{y\in\bbR^n\ |\ u(y)=u(x)\}\,,
\]
we denote by $\nabla_{T}u$ the tangential gradient of $u$ along $L_{u,x}\cap \{\nabla u\neq 0\}$,
and by $k_1,\ldots, k_{n-1}$ the principal curvatures of $L_{u,x}\cap \{\nabla u\neq 0\}$.
By Lemma $2.3$ in \cite{FSV} we obtain  
\begin{align}\label{ugu}
&\left\langle A\nabla|\nabla u|,\nabla|\nabla u|\right\rangle-\sum_{i=1}^n\left\langle A(\nabla u)\nabla u_i, \nabla u_i \right\rangle=a \Big[|\nabla |\nabla u||^2-\sum_{i=1}^n|\nabla u_i|^2\Big]-a'|\nabla u||\nabla_T|\nabla u||^2
\end{align}
and using \eqref{defa} we get
\begin{align}\label{ugu2}
&\left\langle A\nabla|\nabla u|,\nabla|\nabla u|\right\rangle-\sum_{i=1}^n\left\langle A(\nabla u)\nabla u_i, \nabla u_i \right\rangle\\
\nonumber
&=-\lambda_1|\nabla_T|\nabla u||^2-a(|\nabla u|)\Big(\sum_{i=1}^n|\nabla u_i|^2-|\nabla_T|\nabla u||^2-|\nabla|\nabla u||^2\Big)
\end{align}
Notice that
the quantity $$\sum_{i=1}^n|\nabla u_i|^2-|\nabla |\nabla u||^2-|\nabla_T|\nabla u||^2 $$  has a geometric interpretation, 
in the sense that  it can be expressed in terms of the principal curvatures of level sets of $u$.
More precisely, the following formula holds (see \cite{FSV,SZ1,SZ2})
\begin{align}\label{eqfund}
	\sum_{i=1}^n|\nabla u_i|^2-|\nabla |\nabla u||^2-|\nabla_T|\nabla u||^2=|\nabla u|^2 \sum_{j=1}^{n-1} k_j^2
	\qquad {\rm on\ }L_{u,x}\cap \{\nabla u\neq 0\}\,,
\end{align}
so that \eqref{prepoi} becomes
\begin{align*}
&\int_{\{\nabla u\neq 0\}}a(|\nabla u|)h(x)|\nabla u|^2\varphi^2+\Big[\lambda_1|\nabla_T|\nabla u||^2+a(|\nabla u|)|\nabla u|^2 \sum_{j=1}^{n-1} k_j^2\Big]\varphi^2
\\
& \quad -a(|\nabla u|)\left\langle\nabla^2 G \nabla u,\nabla u\right\rangle\varphi^2\ \ud\mu
\\
&\leq \int_{\bbR^n}\left\langle A\nabla\varphi,\nabla\varphi\right\rangle|\nabla u|^2\ud\mu.
\end{align*}

Rearranging the terms, we obtain

\begin{align}\label{prepoi2}\nonumber
 &\int_{\{\nabla u\neq 0\}}a(|\nabla u|)\left\langle (h(x)I-\nabla^2G)\nabla u,\nabla u\right\rangle\varphi^2+\Big[\lambda_1|\nabla_T|\nabla u||^2+a(|\nabla u|)|\nabla u|^2 \sum_{j=1}^{n-1} k_j^2\Big]\varphi^2\ \ud\mu\\
 &\leq \int_{\bbR^n}\left\langle A\nabla\varphi,\nabla\varphi\right\rangle|\nabla u|^2\ud\mu,
\end{align}
where $I\in Mat(n\times n)$ denotes the identity matrix.\\
Notice that from \eqref{prepoi2} we also obtain 
\begin{align}\label{senza}
\int_{\{\nabla u\neq 0\}}a(|\nabla u|)\left\langle (h(x)I-\nabla^2G)\nabla u,\nabla u\right\rangle\varphi^2\ud\mu
 \leq \int_{\bbR^n}\left\langle A\nabla\varphi,\nabla\varphi\right\rangle|\nabla u|^2\ud\mu.
\end{align}
\end{oss} 

\section{One-dimensional symmetry of solutions}
In this section we will use \eqref{prepoi2} to prove several one-dimensional results for solutions to \eqref{eqn}, following the approach introduced in \cite{FarHab} and then developed in \cite{FSV}. Notice that, more recently,  a similar approach has also been used to handle semilinear equations in riemannian and subriemannian spaces (see \cite{FMV, fsv1,fsv2,FP,FV1, PV}) and also to study problems involving the Ornstein-Uhlenbeck operator \cite{CNV}, as well as semilinear equations with unbounded drift \cite{CNP}. 

\vspace{5pt}

The following Lemma is proved in \cite{FSV, FV1}.
\begin{lem}\label{lemfond}
Let $g\in L^{\infty}_{loc}(\bbR^n, [0,+\infty))$ and let $q>0$. Let also, for any $\tau>0$,
\begin{align}
\eta(\tau):=\int_{B_\tau} g(x)\ud x.
\end{align}
Then, for any $0<r<R$,
\begin{align}
\int_{B_R\setminus B_r} \frac{g(x)}{|x|^q}\ud x\leq q\int_r^R\frac{\eta(\tau)}{|\tau|^{q+1}}\ud\tau+\frac{1}{R^q}\eta(R)
\end{align}
\end{lem}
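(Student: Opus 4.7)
The plan is to rewrite the radial integral using the coarea formula and then integrate by parts, throwing away a nonpositive boundary contribution at $\tau=r$.

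First I would observe that since $g \in L^\infty_{loc}$ and the radial function $\tau \mapsto \eta(\tau) = \int_{B_\tau} g$ is absolutely continuous (by dominated convergence it is continuous, and the coarea formula gives the explicit expression $\eta'(\tau) = \int_{\partial B_\tau} g \, d\HH^{n-1}$ for a.e.\ $\tau$), I can apply the coarea formula to slice the annular integral:
\begin{align*}
\int_{B_R\setminus B_r} \frac{g(x)}{|x|^q}\, \ud x = \int_r^R \frac{1}{\tau^q}\left(\int_{\partial B_\tau} g\, \ud\HH^{n-1}\right)\ud\tau = \int_r^R \tau^{-q}\, \eta'(\tau)\, \ud\tau .
\end{align*}

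Next I would integrate by parts on the interval $[r,R]$, which is legitimate because $\eta$ is absolutely continuous and $\tau^{-q}$ is smooth away from zero:
\begin{align*}
\int_r^R \tau^{-q}\, \eta'(\tau)\, \ud\tau = \frac{\eta(R)}{R^q} - \frac{\eta(r)}{r^q} + q\int_r^R \frac{\eta(\tau)}{\tau^{q+1}}\, \ud\tau .
\end{align*}

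Finally, since $g \geq 0$ implies $\eta(r) \geq 0$ and hence $\eta(r)/r^q \geq 0$, I can drop that term to obtain the desired inequality
\begin{align*}
\int_{B_R\setminus B_r}\frac{g(x)}{|x|^q}\, \ud x \leq q\int_r^R \frac{\eta(\tau)}{\tau^{q+1}}\, \ud\tau + \frac{\eta(R)}{R^q}.
\end{align*}

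There is no real obstacle here; the only mild technicality is justifying the integration by parts, which is handled by the absolute continuity of $\eta$ coming from the coarea formula applied to the locally bounded nonnegative function $g$.
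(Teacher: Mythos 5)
Your proof is correct. Note that the paper does not actually reproduce a proof of this lemma; it simply cites \cite{FSV,FV1}, so there is no in-paper argument to compare against. Your coarea--plus--integration-by-parts argument is the standard one and is complete: $\eta$ is absolutely continuous on $[r,R]$ because $g\in L^1(B_R)$ and $\eta(\tau)=\int_0^\tau\bigl(\int_{\partial B_s}g\,\ud\HH^{n-1}\bigr)\ud s$, so integration by parts against the $C^1$ weight $\tau^{-q}$ on $[r,R]$ (with $r>0$) is legitimate, and the discarded term $-\eta(r)/r^q$ is indeed $\le 0$ since $g\ge 0$.

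For what it is worth, there is a mildly more elementary variant that avoids both the coarea formula and integration by parts: write $|x|^{-q}=q\int_{|x|}^R t^{-q-1}\,\ud t+R^{-q}$ for $x\in B_R\setminus B_r$, multiply by $g(x)\ge 0$, integrate over the annulus, and apply Tonelli to the first piece to get $q\int_r^R t^{-q-1}\bigl(\int_{B_t\setminus B_r}g\bigr)\ud t\le q\int_r^R \eta(t)\,t^{-q-1}\,\ud t$, while the second piece is bounded by $\eta(R)/R^q$. This buys you nothing that your argument does not already deliver, but it needs only Tonelli for nonnegative functions rather than absolute continuity of $\eta$.
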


\smallskip

\noindent{\it Proof of Theorem \ref{1Dgen}.}
Let us fix $R>0$ (to be taken appropriately large in what follows) and $x\in\bbR^n$ and let us define 
\begin{equation}\label{test}
\varphi(x):=\left\{\begin{array}{lll}
1 & \mbox{if}& x\in B_{\sqrt{R}}\\
2\frac{\log(R/|x|)}{\log(R)} & \mbox{if}& x\in B_R\setminus B_{\sqrt{R}}\\
0& \mbox{if}& x\in \bbR^n\setminus B_R,
\end{array}
\right.
\end{equation}
where $B_R:=\{y\in\bbR^n\ |\ |y|< R\}$.
Obviously $\varphi\in Lip(\bbR^n)$ and 
$$|\nabla \varphi (x)|\leq C_2\frac{\chi_{\sqrt{R},R}(x)}{\log(R)|x|}$$ 
for suitable $C_2>0$.
Hence for every $R>e$, \eqref{prepoi2} together with $h\geq \lambda_G$ yields
\begin{align}\label{xineq2}
	\int_{\{\nabla u\neq 0\}\cap \overline{B}_R}\Big[\lambda_1|\nabla_T|\nabla u||^2+a(|\nabla u|)|\nabla u|^2 \sum_{j=1}^{n-1} k_j^2\Big]\varphi^2\ \ud\mu \leq \int_{\bbR^n}\left\langle A(\nabla u)\nabla\varphi,\nabla\varphi\right\rangle|\nabla u|^2\ud\mu
\end{align}
therefore, by \eqref{stimaaut}
\begin{align}\label{xineq3}
	\int_{\{\nabla u\neq 0\}\cap \overline{B}_R}\Big[\lambda_1|\nabla_T|\nabla u||^2+a(|\nabla u|)|\nabla u|^2 \sum_{j=1}^{n-1} k_j^2\Big]\varphi^2\ \ud\mu &\leq (1+C) \int_{\bbR^n}a(|\nabla u|)|\nabla \varphi|^2|\nabla u|^2\ud\mu\\
	\nonumber
	&\leq \frac{(1+C)C_2^2}{\log(R)^2}\int_{B_R\setminus B_{\sqrt{R}}}\frac{a(|\nabla u|)|\nabla u|^2}{|x|^2}\ud\mu
\end{align}
Applying Lemma \ref{lemfond} with $g=a(|\nabla u|)|\nabla u|^2 e^G$ and $q=2$, and recalling that 
\[
\int_{B_R}a(|\nabla u|)|\nabla u|^2\ud\mu\leq C_0 R^2
\]
for $R$ large, we obtain 
\begin{align}\label{xineq4}
	\int_{\{\nabla u\neq 0\}\cap \overline{B}_R}\Big[\lambda_1|\nabla_T|\nabla u||^2+a(|\nabla u|)|\nabla u|^2 \sum_{j=1}^{n-1} k_j^2\Big]\varphi^2\ \ud\mu&\leq \frac{(1+C) C_0C_2^2}{\log(R)^2}\Big[2\int_{\sqrt{R}}^R\frac{1}{|\tau|}\ud\tau+1\Big]\\
	\nonumber
	&\leq 2\frac{(1+C) C_0C_2^2}{\log(R)}.
\end{align}
Therefore, sending $R\to +\infty$ in \eqref{xineq4} we get
\begin{align}\label{trentuno}
	k_j(x)=0\quad \mbox{and}\quad |\nabla_T|\nabla u||(x)=0
\end{align}
for every $j=1,\ldots, n-1$ and every $x\in\{\nabla u\neq 0\}$. From this and Lemma $2.11$ in \cite{FSV} we get the one-dimensional symmetry of $u$.

Let us now suppose $n=2$ and $a(|\nabla u|)|\nabla u|^2e^{G}\in L^\infty(\R^2)$. 
Taking in \eqref{prepoi2} the following test function   
\begin{equation}\label{test2} \phi(x)=\max\left[0, \min\left(1, \frac{\ln R^2-\ln |x|}{\ln R}\right)\right],\end{equation} recalling that $h\geq \lambda_G$ and following \cite[Cor. 2.6]{FSV}, we then obtain  
\[		
\int_{\{\nabla u\neq 0\}\cap \overline{B}_R}\Big[\lambda_1|\nabla_T|\nabla u||^2+a(|\nabla u|)|\nabla u|^2 \sum_{j=1}^{n-1} k_j^2\Big]\varphi^2\ \ud\mu\leq    C^{'}  \int_{B_{R^2}\setminus B_R} \frac{a(|\nabla u|(x))}{|x|^2  \ (\ln R)^2}|\nabla u|^2 e^{G(x)}  \ud x
\]
for some constant $C^{'} >0$. 
When $R\to +\infty$, since  $a(|\nabla u|)|\nabla u|^2 e^{G(x)}$ is bounded, the r.h.s. term of the previous inequality goes to zero, and we conclude again that $u$ is one-dimensional.  

\smallskip

Assume now that $u$ is not constant.  If we take in \eqref{senza} the same test functions as above, we  get   
\[ 
\int_{\R^n}a(|\nabla u|)\left\langle(h(x) {\rm I}_n-\nabla^2 G(x))\nabla u, \nabla u\right\rangle \ud \mu(x) =0\,.
\] 
Using the fact that $u(x)=u_0(\left\langle\omega, x\rangle\right)$ and $a(t)>0$
we  obtain that  
$\left\langle(h(x) {\rm I}_n-\nabla^2 G(x))\omega, \omega\right\rangle  =0$ 
for all $x$ such that $u'_0(\left\langle\omega, x\rangle\right)\neq 0$. Since $u$ is not constant and is a solution to the elliptic equation \eqref{eqn}, the set of points such that $u'_0(\left\langle\omega, x\rangle\right)=0$ has zero measure, so, by the regularity of $G$ we conclude that 
$$
\left\langle(h(x) {\rm I}_n-\nabla^2 G(x))\omega, \omega\right\rangle  =0 \qquad\forall \ x\in\bbR^n\,,
$$
which gives \eqref{ugu1} and \eqref{ugu22}. 
\qed

\smallskip

As pointed out in \cite{CNP}, a Liouville type result follows from Theorem \ref{1Dgen}.

\begin{cor}\label{corcor}
Let $G,h,u$ satisfy the assumptions in Theorem \ref{1Dgen}. Assume further that 
$h\in C^0(\bbR^n)$ and $h(x)>\lambda_G(x)$ for some $x\in \bbR^n$. Then $u$ is constant.

In particular, if $u$ is a stable solution, that is $h\equiv 0$, and $\lambda_G(x)<0$ for some $x\in\bbR^n$, then $u$ is constant.
\end{cor}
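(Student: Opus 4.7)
The plan is to obtain the conclusion directly from Theorem \ref{1Dgen}, arguing by contradiction and exploiting the rigidity statement \eqref{ugu22}.

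First, I would invoke Theorem \ref{1Dgen}, whose hypotheses are satisfied by assumption: the $h$-stable weak solution $u$ must then be one-dimensional, so there exist $\omega\in\bbS^{n-1}$ and $u_0:\bbR\to\bbR$ with $u(x)=u_0(\langle\omega,x\rangle)$. Suppose now by contradiction that $u$ is not constant, i.e.\ $u_0$ is not constant. Then the second part of Theorem \ref{1Dgen} applies and yields, in particular, the pointwise identity
\[
\lambda_G(x)=h(x)\qquad\forall\,x\in\bbR^n.
\]

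This, however, is incompatible with the extra standing assumption of the corollary that $h(x_0)>\lambda_G(x_0)$ at some $x_0\in\bbR^n$ (the continuity hypothesis on $h$, together with the fact that $\lambda_G$ is continuous because $G\in C^2(\bbR^n)$, is only needed to make sense of this strict inequality at a point). This contradiction forces $u_0$ to be constant, hence $u$ itself must be constant on $\bbR^n$.

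For the second part, it suffices to specialize to $h\equiv 0$: this is precisely the classical stability condition, and the hypothesis $\lambda_G(x_0)<0$ at some point $x_0$ is nothing but the condition $h(x_0)=0>\lambda_G(x_0)$, so the first part applies and gives that every stable solution is constant whenever $\nabla^2 G$ fails to be positive semidefinite at a single point. There is no real obstacle in this argument; the entire content has already been packaged into the rigidity conclusion \eqref{ugu22} of Theorem \ref{1Dgen}, and the corollary only needs to contrapose that statement.
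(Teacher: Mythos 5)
Your proof is correct and takes precisely the route the paper has in mind: the paper does not write out an explicit proof of the corollary (it merely remarks that the Liouville statement ``follows from Theorem~\ref{1Dgen}'', citing \cite{CNP}), and the intended argument is exactly the contraposition of the rigidity conclusion $\lambda_G(x)=h(x)$ for all $x$, which is what you use.

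One small observation worth making: the continuity of $h$ required in the corollary is not merely there to make the pointwise strict inequality $h(x_0)>\lambda_G(x_0)$ well-defined (as you suggest). Since $h$ is only $L^1_{loc}$ in Theorem~\ref{1Dgen}, the displayed identity $\lambda_G(x)=h(x)=C''(\langle\omega,x\rangle)$ ``for all $x$'' in the rigidity conclusion should really be read as the statement that $h$ admits a continuous representative equal to $\lambda_G$; the standing hypothesis $h\geq\lambda_G$ is likewise an a.e.\ inequality a priori. Once $h$ is assumed continuous, as in the corollary, all these pointwise statements are unambiguous and the contradiction with $h(x_0)>\lambda_G(x_0)$ is immediate, as you conclude. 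The specialization to stable solutions ($h\equiv 0$) is handled correctly.
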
 

In the following lemma we give a sufficient condition 
for a solution $u$ to satisfy condition (a) in Theorem \ref{1Dgen}.

\begin{lem}\label{lemAlb}
Let $u$ be a weak solution to \eqref{eqn}. 
Then, for each $\varphi\in C^1_c(\bbR^n)$,
\begin{align}\label{stimaAlb1}
	\int_{\bbR^n}a(|\nabla u|)|\nabla u|^2\varphi\ud\mu=-\int_{\bbR^n}a(|\nabla u|)\left\langle \nabla u,\nabla\varphi\right\rangle u\ud\mu+\int_{\bbR^n}f(u)u\varphi\ud\mu.
\end{align}
In particular, if $t\to ta(t)\in L^{\infty}((0,+\infty))$, $u\in L^{\infty}(\bbR^n)$ and $\mu(\bbR^n)<+\infty$ then there exists $C>0$ such that
\begin{align}\label{stimaAlb2}
	\int_{\bbR^n}a(|\nabla u|)|\nabla u|^2\ud\mu\leq C\,.
\end{align}
\end{lem}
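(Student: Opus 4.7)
\textbf{Plan of proof for Lemma \ref{lemAlb}.}

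The plan is first to establish the integration-by-parts identity \eqref{stimaAlb1} by plugging a judicious test function into the weak formulation \eqref{weak}, and then to recover the uniform bound \eqref{stimaAlb2} by letting the test function approach the constant $1$ via a family of cut-offs, exploiting the extra boundedness hypotheses to control the boundary term.

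For \eqref{stimaAlb1}: given $\varphi \in C^1_c(\bbR^n)$, the product $u\varphi$ belongs to $C^1_c(\bbR^n)$ because $u \in C^1(\bbR^n)$ by Definition of weak solution and $\varphi$ has compact support. Hence $u\varphi$ is an admissible test function in \eqref{weak}. Using $\nabla(u\varphi) = \varphi\,\nabla u + u\,\nabla\varphi$ and the bilinearity of $\langle\cdot,\cdot\rangle$, the weak formulation reads
\[
\int_{\bbR^n} a(|\nabla u|)\,|\nabla u|^2\,\varphi\,\ud\mu + \int_{\bbR^n} a(|\nabla u|)\,u\,\langle\nabla u,\nabla\varphi\rangle\,\ud\mu \;=\; \int_{\bbR^n} f(u)\,u\,\varphi\,\ud\mu,
\]
which is precisely \eqref{stimaAlb1} after rearranging.

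For \eqref{stimaAlb2}: choose a standard radial cut-off $\varphi_R \in C^1_c(\bbR^n)$ with $0\le\varphi_R\le 1$, $\varphi_R \equiv 1$ on $B_R$, $\supp\varphi_R \subset B_{2R}$, and $|\nabla\varphi_R| \le C/R$. Apply \eqref{stimaAlb1} with $\varphi = \varphi_R$. The left-hand side is monotone nondecreasing in $R$ and, by Fatou/monotone convergence, converges to $\int_{\bbR^n} a(|\nabla u|)|\nabla u|^2\,\ud\mu$. For the gradient term on the right, use the pointwise estimate
\[
\bigl|a(|\nabla u|)\,u\,\langle \nabla u,\nabla\varphi_R\rangle\bigr| \;\le\; \|u\|_\infty\,\|ta(t)\|_{L^\infty(0,+\infty)}\,|\nabla\varphi_R|,
\]
so that
\[
\left|\int_{\bbR^n} a(|\nabla u|)\,u\,\langle \nabla u,\nabla\varphi_R\rangle\,\ud\mu\right| \;\le\; \frac{C\,\|u\|_\infty\,\|ta(t)\|_{L^\infty(0,+\infty)}\,\mu(\bbR^n)}{R} \;\longrightarrow\; 0
\]
as $R\to+\infty$, since $\mu(\bbR^n)<+\infty$. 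For the last term, $|f(u)u\varphi_R|$ is bounded by the constant $\sup_{|s|\le\|u\|_\infty}|f(s)s|$, which is integrable against the finite measure $\mu$; by dominated convergence this term tends to $\int_{\bbR^n} f(u)\,u\,\ud\mu$, a quantity bounded in absolute value by $\sup_{|s|\le\|u\|_\infty}|f(s)s|\cdot\mu(\bbR^n)$. Combining these three limits gives \eqref{stimaAlb2}.

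There is no serious obstacle here: the argument is a direct application of \eqref{weak} with the test function $u\varphi$, followed by a standard cut-off exhaustion. The only point that requires a moment of care is checking that $u\varphi$ is genuinely admissible (which follows from the $C^1$ regularity built into the definition of weak solution) and that the two hypotheses $ta(t)\in L^\infty$ and $u\in L^\infty$ combine with $\mu(\bbR^n)<+\infty$ to kill the gradient term uniformly in $R$.
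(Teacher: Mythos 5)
Your proof is correct and follows essentially the same route as the paper: the identity \eqref{stimaAlb1} is obtained by testing \eqref{weak} with $u\varphi$, and \eqref{stimaAlb2} follows by exhausting $\bbR^n$ with cut-offs and using $\|ta(t)\|_\infty$, $\|u\|_\infty$, and $\mu(\bbR^n)<\infty$ to control the extra terms. The only cosmetic difference is that you use annuli $B_{2R}\setminus B_R$ with $|\nabla\varphi_R|\le C/R$, so the gradient term actually vanishes in the limit, whereas the paper uses unit-width annuli with $|\nabla\varphi_R|\le 3$ and simply bounds it by a constant; both yield the same conclusion.
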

\begin{proof}
Clearly \eqref{stimaAlb1} follows by taking $u\varphi$ as test function in \eqref{weak}. 

Let us show \eqref{stimaAlb2}.
For every $R>1$ let $\Phi_R\in C^{\infty}(\bbR)$ be such that $\Phi_R(t)=1$ if $t\leq R$, $\Phi_R(t)=0$ if $t\geq R+1$ and $\Phi_R'(t)\leq 3$ for $t\in [R,R+1]$, 
and define $\varphi(x):=\Phi_R(|x|)$. 
Then $|\nabla \varphi(x)|\leq |\Phi_R'(|x|)|\leq 3$, and \eqref{stimaAlb1} yields 
\begin{align*}
\int_{B_R}a(|\nabla u|)|\nabla u|^2\ud\mu&\leq 3\int_{B_{R+1}\setminus B_R}a(|\nabla u|)|\nabla u||u|\ud\mu+\int_{B_{R+1}}|f(u)||u|\ud\mu\le C,
\end{align*}
which gives \eqref{stimaAlb2} by letting $R\to +\infty$. 

\end{proof}

\medskip

In the rest of the section we fix $G(x)=-|x|^2/2$. 
We start with a result which follows directly from Lemma \ref{stab}.

\begin{lem}\label{stabGauss}
Let $G(x):=-|x|^2/2$ and assume that $u$ is a monotone weak solution to \eqref{eqn}, i.e. there exists $i\in \{1,\ldots, n\}$ such that
\begin{align}
\partial_i u (x) >0 \quad \forall x\in\bbR^n,
\end{align}
then $ u \in C^{2}(\bbR^n)$ and $u$ is $(-1)-$stable.
\end{lem}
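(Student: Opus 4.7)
The plan is to read off the conclusion directly from Lemma \ref{stab} after computing the relevant quantities for the Ornstein--Uhlenbeck weight $G(x)=-|x|^2/2$, with a short preliminary argument for the $C^2$ regularity.

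First I would deal with the regularity. The hypothesis $\partial_i u > 0$ on all of $\bbR^n$ forces $\{\nabla u=0\}=\emptyset$, so the weak formulation \eqref{weak} is a quasilinear equation with coefficients that are smooth functions of $\nabla u$ on every compact set (since $a\in C^{1,1}_{loc}((0,+\infty))$ and $|\nabla u|$ is locally bounded away from zero on compact sets, by continuity and compactness). Together with the strict ellipticity $\lambda_1(t),a(t)>0$ for $t>0$, this places the equation in the framework of classical quasilinear elliptic regularity (e.g.\ \cite{LU68,T84}), yielding $u\in C^2(\bbR^n)$. This is the step I would be most careful with, but it is entirely standard given the monotonicity.

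Next I would invoke Lemma \ref{stab}, whose hypotheses are exactly the monotonicity assumption. It gives $h$-stability of $u$ with
\[
h(x)=\frac{\left\langle \nabla u(x),\nabla G_i(x)\right\rangle}{u_i(x)}.
\]
For $G(x)=-|x|^2/2$ one has $G_i(x)=-x_i$ and hence $\nabla G_i(x)=-e_i$, so
\[
\left\langle \nabla u(x),\nabla G_i(x)\right\rangle=-u_i(x),
\]
which yields $h(x)\equiv -1$. Substituting this into Definition \ref{phstab} shows that $u$ is $(-1)$-stable, completing the proof.

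No genuine obstacle is expected: the whole argument is a direct specialization of Lemma \ref{stab}, and the only non-trivial ingredient — the pointwise formula for $h$ — reduces to the elementary computation $\nabla G_i=-e_i$ above.
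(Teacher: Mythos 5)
Your proposal is correct and takes essentially the same route the paper intends: the paper states that Lemma~\ref{stabGauss} ``follows directly from Lemma~\ref{stab}'', and your computation $G_i(x)=-x_i$, $\nabla G_i=-e_i$, hence $h\equiv -1$, is exactly that specialization, while the $C^2$ regularity from $\{\nabla u=0\}=\emptyset$ and classical quasilinear elliptic theory is the same observation the paper makes inside the proof of Lemma~\ref{stab}.
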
  

\smallskip

\noindent{\it Proof of Theorem \ref{propex}.}
We start observing that $u$ is $(-1)-$stable by Lemma \ref{stab}. 

Since $\nabla^2 G(x)=-Id$ we have 
\begin{align}\label{pu}
	-1=h(x)=\lambda_G(x)=-1.
\end{align}
If $a(t)=t^{p-2}$ for some $p >1$ then
\begin{align}
	\lambda_1(t)=(p-1)t^{p-2}=(p-1)a(t)\quad \forall t>0
\end{align}
and the conclusion follows by Theorem \ref{1Dgen}. 

If $a(t)=(1+t^q)^{-\frac{1}{q}}$ with $q>1$ then
\begin{align}\label{po}
	&\lambda_1(t)=(1+t^q)^{-\frac{1}{q}}-(1+t^q)^{-\frac{q+1}{q}}t^q\leq a(t)\quad \forall t>0,\\
	\label{po2}
	&ta(t)\leq 1\quad \forall t>0.
\end{align}
By Lemma \ref{lemAlb} and \eqref{po2} there exists $C>0$ such that
\begin{align}\label{stt}
	\int_{\bbR^n}a(|\nabla u|)|\nabla u|^2\ud\mu\leq C.
\end{align}
Notice that, if $a(t)=1$ for every $t>0$, 
by Theorem \cite[Theorem 4.1]{Lun} we have $u\in H^{2}(\bbR^n,\mu)$, so that \eqref{stt} holds in this case, too.

The conclusion follows by \eqref{po}, \eqref{stt} and Theorem \ref{1Dgen}. 
\qed

\section{Solutions with Morse index bounded by the euclidean dimension}

In this section we will focus on the Ornstein-Uhlenbeck operator. 
More precisely, we will consider weak solutions $u\in H^1(\bbR^n,\mu)\cap L^{\infty}(\bbR^n)$ to 
\begin{align}\label{OU}
	\Delta u-\left\langle x,\nabla u\right\rangle+f(u)=0
\end{align}
where $f\in C^1(\bbR)$, and 
we will prove some new symmetry results for solutions 
with Morse index $k\leq n$. We recall that, 
by Theorem \cite[Theorem 4.1]{Lun}, bounded weak solutions to \eqref{OU} satisfy  $u\in H^{2}(\bbR^n,\mu) \cap L^{\infty}(\bbR^n)$. 
 
\begin{defi}\label{eigen}
A bounded weak solution $u$ to the Ornstein-Uhlenbeck operator has 
Morse index $k\in\mathbb N$ if $k$ is the maximal dimension of a subspace $X$ of $H^1(\bbR^n,\mu)$ such that
\begin{align}\label{varsec}
	Q_u(\varphi):=\int_{\bbR^n}|\nabla\varphi|^2-f'(u)\varphi^2\ud\mu<0\quad \forall \varphi\in X\setminus\{0\}.
\end{align}
\end{defi}
\begin{oss}\label{qpl}\rm
Let $u$ be a bounded solution to \eqref{OU} and let 
$L:H^{2}(\bbR^n,\mu)\to L^{2}(\bbR^n,\mu)$
be the linear operator defined as 
\begin{equation}\label{eqL}
L(v):=-\Delta v+\left\langle \nabla v, x\right\rangle-f'(u)v.
\end{equation}
Notice that $L$ is self-adjoint in $L^2(\bbR^n,\mu)$ with compact inverse, so that 
by the Spectral Theorem \cite{kato} there exists an orthonormal basis of $L^2(\bbR^n,\mu)$ consisting of eigenvectors of $L$, and 
each eigenvalue of $L$ is real.

Then, $u$ has Morse index $k$ if and only if $L$ has exactly $k$ strictly negative eigenvalues, repeated according to their geometric multiplicity (see for instance \cite[Theorem 4.1]{Lun}).
\end{oss}

The following Proposition is proved in \cite[Lemma 3.2]{CNV}.

\begin{prop}\label{autof}
Let 
$u$ be a bounded weak solution to \eqref{OU}. If for some $i=1,\ldots, n$, $u_i$ is not identically zero then it is an eigenfunction of $L$ with eigenvalue $-1$, i.e.
\begin{align}\label{eqngen2}
\int_{\bbR^n} \left\langle \nabla u_i, \nabla\varphi \right\rangle+u_i\varphi-f'(u)u_i\varphi\  \ud\mu =0, \quad \forall \varphi\in H^1(\bbR^n,\mu).
\end{align}
\end{prop}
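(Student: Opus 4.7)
The plan is to derive the identity by differentiating the equation \eqref{OU} with respect to $x_i$ and then integrating against $\varphi$ with respect to the Gaussian measure $\ud\mu=e^{-|x|^2/2}\ud x$.

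First, I would record the regularity: since $u$ is a bounded weak solution of \eqref{OU}, the result cited from \cite{Lun} (referenced just before the statement) gives $u\in H^2(\bbR^n,\mu)\cap L^\infty(\bbR^n)$, hence each $u_i\in H^1(\bbR^n,\mu)$. By elliptic regularity applied to \eqref{OU} with smooth coefficients and $f\in C^1$, $u$ is classically $C^2$ and we may legitimately differentiate the pointwise equation in $x_i$. Using $\p_i\langle x,\nabla u\rangle=u_i+\langle x,\nabla u_i\rangle$, this yields the linearised equation
\begin{align*}
\Delta u_i-\langle x,\nabla u_i\rangle-u_i+f'(u)u_i=0\qquad\text{in }\bbR^n,
\end{align*}
which in the notation of \eqref{eqL} is precisely $L(u_i)=-u_i$, so $u_i$ solves the PDE corresponding to the eigenvalue $-1$.

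Next I would convert this pointwise identity into the desired weak formulation. The Ornstein--Uhlenbeck measure satisfies the integration-by-parts rule
\begin{align*}
\int_{\bbR^n}\bigl(-\Delta v+\langle x,\nabla v\rangle\bigr)\varphi\,\ud\mu=\int_{\bbR^n}\langle\nabla v,\nabla\varphi\rangle\,\ud\mu,
\end{align*}
valid for $v\in H^2(\bbR^n,\mu)$ and $\varphi\in C^1_c(\bbR^n)$, since $\mbox{div}(e^{-|x|^2/2}\nabla v)=(\Delta v-\langle x,\nabla v\rangle)e^{-|x|^2/2}$. Applying this with $v=u_i$ and multiplying $L(u_i)=-u_i$ by $\varphi$ against $\ud\mu$ gives
\begin{align*}
\int_{\bbR^n}\langle\nabla u_i,\nabla\varphi\rangle-f'(u)u_i\varphi\,\ud\mu=-\int_{\bbR^n}u_i\varphi\,\ud\mu,
\end{align*}
which rearranges to \eqref{eqngen2} for every $\varphi\in C^1_c(\bbR^n)$.

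Finally, I would extend the identity from $\varphi\in C^1_c(\bbR^n)$ to all $\varphi\in H^1(\bbR^n,\mu)$ by density, noting that $C^1_c(\bbR^n)$ is dense in $H^1(\bbR^n,\mu)$. The three integrals $\int\langle\nabla u_i,\nabla\varphi\rangle\,\ud\mu$, $\int u_i\varphi\,\ud\mu$, and $\int f'(u)u_i\varphi\,\ud\mu$ are continuous in $\varphi\in H^1(\bbR^n,\mu)$: the first two by Cauchy--Schwarz using $u_i\in H^1(\bbR^n,\mu)$, and the third because $u\in L^\infty$ implies $f'(u)\in L^\infty(\bbR^n)$, so the integrand is controlled by $\|f'(u)\|_\infty\|u_i\|_{L^2(\mu)}\|\varphi\|_{L^2(\mu)}$. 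The main (minor) obstacle is therefore just checking the $H^2(\bbR^n,\mu)$-regularity needed to run the integration by parts on the whole space; this is taken care of by the Lunardi result already quoted, after which the remainder is a routine density argument.
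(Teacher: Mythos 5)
The paper does not give its own proof of this proposition: it is cited to \cite[Lemma~3.2]{CNV}. Your argument is a correct self-contained proof and, in substance, it is the natural one for this linearisation lemma.

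One point in your write-up is worth tightening. You assert that ``$u$ is classically $C^2$ and we may legitimately differentiate the pointwise equation in $x_i$,'' but $C^2$ is not enough to differentiate the equation; one needs $u\in W^{3,p}_{loc}$ (or $C^3$). This does follow by bootstrap from $f\in C^1$: once $u$ is bounded, $\Delta u=\langle x,\nabla u\rangle-f(u)\in L^p_{loc}$ gives $u\in W^{2,p}_{loc}$ for all $p$; then $\langle x,\nabla u\rangle-f(u)\in W^{1,p}_{loc}$ (using that $f'(u)\nabla u$ is locally bounded), so $u\in W^{3,p}_{loc}$ and $u_i$ solves $L(u_i)=-u_i$ in the distributional sense. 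Once that is in place, your integration-by-parts identity and the density of $C^1_c(\bbR^n)$ in $H^1(\bbR^n,\mu)$ close the argument exactly as you describe.

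An alternative route that avoids the higher-regularity bootstrap altogether is to specialise Lemma~\ref{eqnlin} of this paper to $a\equiv 1$ and $G(x)=-|x|^2/2$: there $G_i=-x_i$, $\nabla G_i=-e_i$, and $A(\nabla u)={\rm I}$, so \eqref{eqngen} becomes precisely \eqref{eqngen2} for every $\varphi\in C^1_c(\bbR^n)$; this manipulation differentiates the weak formulation rather than the pointwise equation and only uses $u\in H^2(\bbR^n,\mu)$. Your density step then applies verbatim. Either route is fine; the weak-formulation version is closer to the framework already set up in Section~2, while yours is more elementary at the cost of a regularity remark.
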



\smallskip

We are now in a position to prove Theorem \ref{teomorse}.

\smallskip

\noindent{\it Proof of Theorem \ref{teomorse}.}
By \cite[Theorem 4.1]{Lun} every bounded weak solution to \eqref{OU} belongs to $H^{2}(\bbR^n,\mu)$, hence $u_i\in H^1(\bbR^n,\mu)$ for all $i=1,\ldots, n$. Therefore, using \eqref{eqngen2} with $u_i$ as test function we obtain 
\begin{align}
Q_u(u_i)=\int_{\bbR^n} |\nabla u_i|^2-f'(u)u^2_i\ud\mu =-\int_{\bbR^n}u^2_i\leq 0, \quad \forall i=1,\ldots, n.
\end{align}
In particular
\begin{align}
	Q_u(u_i)<0
\end{align}
for every $i=1,\ldots, n$ such that $u_i$ is not identically zero.

Let $L$ be the operator defined in \eqref{eqL}.
If $k=0$ then $u$ is stable, hence it is constant by Corollary \ref{corcor}.

If $k=1$ then, by Remark \ref{qpl} and Proposition \ref{autof}, it follows that $-1$ is the smallest eigenvalue of $L$, that is 
\begin{align}\label{wpm}
	\inf_{\varphi\in H^{1}(\bbR^n,\mu), ||\varphi||_{L^2(\bbR^n,\mu)}=1}\Big(\int_{\bbR^n}|\nabla\varphi|^2-f'(u)\varphi^2\ \ud\mu \Big)=-1.
\end{align}
Using \eqref{wpm} it follows that $u$ is $(-1)-$stable and therefore, by Theorem \ref{1Dgen}, $u$ is one-dimensional. 

Assume now $2\leq k\leq n$ and define $S:=\{i\in \{1,\ldots, n\}\ |\ u_i(x)\neq 0,\ \mbox{for some}\  x\in\bbR^n\}$ and $X:=\spa_{i\in S} \{u_i\}\subset H^1(\bbR^n,\mu)$. Clearly,
\begin{align}
	Q_u(v)<0\quad \forall v\in X\setminus\{0\}
\end{align}
therefore, by Definition \ref{eigen}, $X$ has dimension less or equal than $k$, i.e. there exists $I\subset S$ with $|I|\geq |S|-k$ such that $\{u_i\}_{i\in I}$ are linearly dependent \cite{kato}. This means that, up to an orthogonal change of variables, $u$ depends on at most $k$ variables.

Let us assume by contradiction that $u$ 
is a function of exactly $k$ variables.
We claim that $-1$ is the smallest eigenvalue of $L$, as before. Indeed, if this is not the case, then there exist $\lambda<-1$ and $v\in H^1(\bbR^n,\mu)$, with $v\not\equiv 0$, such that $L(v)=\lambda v$, therefore, by the linear independence of eigenvectors associated to different eigenvalues, it follows that $Y:=\spa\{u_i,v\}$ has dimension equal to $k+1$ and $Q_u(w)<0$ for every $w\in Y\setminus\{0\}$ which is in contradiction with the fact that $u$ has Morse index $k$. This proves that $u$ is a function of at most $(k-1)$ variables, as claimed.
\qed

\end{document}